\newcommand{\unl}{\textsl}
\newcommand{\bd}{\boldsymbol}
\def\x{{\boldsymbol x}}
\def\y{{\boldsymbol y}}
\def\z{{\boldsymbol z}}
\def\0{{\bf 0}}
\newtheorem{rem}{Remark}
\newtheorem{exmp}{Example}
\newtheorem{lem}{Lemma}
\newtheorem{thm}{Theorem}
\numberwithin{equation}{section}
\journal{Computer Methods in Applied Mechanics and Engineering}
\begin{document}
	
\begin{frontmatter}
	\title{A novel bond-based nonlocal diffusion model with matrix-valued coefficients in  non-divergence form and its collocation discretization}
\tnotetext[t1]{H. Tian's work is partially supported by Chinese Fundamental Research Funds for the Central Universities 202264006 and National Nature Science Foundation of China grants 11801533 and 11971482.   L. Ju's work is partially supported by U.S. National Science Foundation grant  DMS-2109633.}
\author[ad1]{Hao Tian}
\ead{ haot@ouc.edu.cn}
\author[ad1]{Junke Lu}
\ead{lujunke@stu.ouc.edu.cn}
\author[ad2]{Lili Ju\corref{cor1}}
\ead{ju@math.sc.edu}

\cortext[cor1]{Corresponding author.}
\address[ad1]{School of Mathematical Sciences, Ocean University of China, Qingdao, Shandong 266100, China.}
\address[ad2]{Department of Mathematics, University of South Carolina, Columbia, SC 29208, USA.}

\pagestyle{myheadings} \markboth{H. Tian, L. Ju, Q. Du and Q. Du}{A Gaussian kernel based nonlocal anisotropic diffusion model in non-divergence form and its collocation discretization}

\date{}

\begin{abstract}
 Existing nonlocal diffusion models are predominantly  classified into two categories: bond-based models, which involve  a single-fold integral and usually simulate isotropic diffusion, and  state-based models, which contain a double-fold integral and can additionally prototype  anisotropic diffusion. While bond-based models exhibit computational efficiency, they are somewhat limited in their modeling capabilities. In this paper, we  develop  a novel bond-based nonlocal diffusion model with matrix-valued coefficients in non-divergence form. Our approach incorporates the coefficients into a covariance matrix and employs the multivariate Gaussian function with truncation to define the kernel function,  and subsequently model the nonlocal diffusion process through the bond-based formulation.  We successfully establish the well-posedness of  the proposed model along with deriving some of its properties on maximum principle and mass conservation.
 Furthermore, an efficient linear collocation scheme is designed for numerical solution of our model.  Comprehensive experiments in two and three dimensions are  conducted to showcase  application of the  proposed nonlocal model to both isotropic and anisotropic diffusion problems  and to demonstrate numerical accuracy  and effective asymptotic compatibility of the  proposed collocation scheme. 
 \end{abstract}

\begin{keyword}
Nonlocal model, anisotropic diffusion,  bond-based model,  Gaussian function, collocation scheme, asymptotic compatibility
\end{keyword}

\end{frontmatter}

\section{Introduction}

Isotropic and anisotropic diffusion phenomena have received much attention due to their broad applications, such as  inertial confinement fusion (ICF) \cite{LaLi2016,ReLi2017}, magnetic confinement fusion (MCF) \cite{Voge2011}, skyrmion diffusion in magnetism \cite{KeWe2021}, image processing \cite{PeMa1990,YoXu1996}, gas diffusion in fractal porous media \cite{MaCh2014}, and fluid distribution in fiber-reinforced composites \cite{GaFa2018}. These diffusion processes are usually described by partial differential equations (PDEs), for examples,  Fick’s law \cite{Vazq2006}, Darcy’s law \cite{Whit1986} and Fourier’s law \cite{GaDe1997}. The study of these equations is of great interest in numerous scientific  fields due to the importance of diffusion processes in physical systems.  The  linear diffusion equation with matrix-valued coefficients in non-divergence form  is  given by
\begin{equation}\label{locmod}
-\mathcal{L}u(\x):=-\sum^d_{i,j=1} a^{i,j}(\x) \frac{\partial^2 u}{\partial x_i \partial x_j}\left(\x\right)=f(\x),\quad \x \in \Omega,
\end{equation}
where $\Omega\in {\mathbb R}^d$ is a bounded Lipschitz domain,  $u(\x) :\Omega\rightarrow {\mathbb R}$ is the unknown function, and  the coefficient matrix $\mathbf{A}(\x)=\left(a^{i,j}(\boldsymbol{x})\right)_{i, j=1}^d $ is assumed to be elliptic, i.e., symmetric and positive definite and 
\begin{equation}\label{ellip}
\lambda|\boldsymbol{\xi}|^2 \leq \boldsymbol{\xi}^T \mathbf{A}(\boldsymbol{x}) \boldsymbol{\xi} \leq \Lambda|\boldsymbol{\xi}|^2, \quad \forall\, \boldsymbol{\xi} \in \mathbb{R}^d, \boldsymbol{x} \in \Omega,
\end{equation}
where $\lambda$ and $\Lambda$ are some positive constants. The diffusion process is said to be {\em isotropic} if $ \mathbf{A}(\boldsymbol{x})=a(\x){\bf I}$ where $a(\x)$ is a scalar-valued coefficient function and   ${\bf I}$ denotes the identity matrix, and otherwise it is said to be {\em anisotropic} . 
For the well-posedness of the problem  \eqref{locmod}, we refer to the comprehensive analysis and results presented in   \cite{GiTr1998}. For simplicity, we assume $a^{i,j}(\x)\in C^{0,1}(\Omega)$ and $f\in L^2(\Omega)$   in this paper, which guarantees the existence and uniqueness of a solution for  \eqref{locmod} under the Dirichlet boundary condition.
This type of equations have  been used in many applications, {such as stochastic optimal control and finance \cite{FlSo2006}, the optimal transportation problem and geometry \cite{CaGu1997,LeMi2017,TrWang2008}, the linearized fully nonlinear problems \cite{BrGu2011,Ne2013}.}
Since analytic solutions  are  usually unavailable in practice, many numerical methods  have been developed to solve the equation \eqref{locmod}, including the finite volume (FV) schemes \cite{ShYu2016,WaSh2022,YuYu2020}, the finite volume element (FVE) method \cite{GaYu2020,LvLi2012,ShYu2018} and so on.

It is known that classical diffusion models described by PDEs  often cannot provide a proper description of diffusion through heterogeneous materials, and they are limited in describing anomalous diffusion that does not obey Fick's law. Nonlocal diffusion model is an alternative to the PDE-based diffusion equation, which is based on Silling’s reformulation of the theory of elasticity for solid mechanic \cite{Sill2000,SiLe2010}. Bond-based models (i.e., unweighted nonlocal models) and state-based models (i.e., weighted nonlocal models) are the two major types of nonlocal diffusion models  \cite{Du2019, DuGu2013, DuZh2011, GuLe2010}. the former ones are  independent on the influence of other points in their domain of interaction and the latter ones are opposite. 

In the past two decades 
nonlocal diffusion models based on integral equations have gained extensive attention \cite{DuGu2012,DuJu2013,LuYa2022}.
The literature contains advanced engineering applications of nonlocal diffusion equations. For instance, some studies have explored the peridynamic formulation for heat transfer, including one-dimensional problems with different boundary conditions  \cite{BoDu2010}. Other studies have introduced multidimensional peridynamic formulations for transient heat transfer  \cite{BoDu2012}, and refined bond-based peridynamic approaches for thermo-mechanical coupling problems \cite{ChGu2021}. State-based peridynamic heat conduction equations and their applications were also investigated, with examples including the use of peridynamic differential operators for steady-state heat conduction analysis in plates with insulated cracks  \cite{Dord2019} and the nonlocal discrete model based on the lattice particle method for modeling anisotropic heat conduction \cite{ChLi2022}.
Nonlocal diffusion models also have been applied in fluid transport in porous media and corrosion. For example, state-based peridynamic formulations were developed to simulate convective transport of single-phase flow and transient moisture flow through heterogeneous and anisotropic porous media, as in \cite{KaFo2014} and \cite{JaMo2015}, respectively. Nonlocal fluid transport models were proposed to capture the non-local transport effects and non-linear mechanical behaviors of fluids in heterogeneous saturated porous media \cite{SuYu2022}. In the area of corrosion, a variety of coupled mechano-chemical peridynamic models were developed to describe stress-assisted corrosion and stress corrosion cracking; specific examples include models introduced in \cite{DuGu2012,JaCh2019,ZhJa2021}, which presents a coupled peridynamic corrosion-fracture model.

A stated-based nonlocal model was successfully proposed and analyzed to simulate the anisotropic diffusion in \cite{DeGu2021}, but it is theoretically not necessary to use the state-based formulation for modeling anisotropic diffusion. In \cite{DuGu2013}, a connection between the bond-based nonlocal model and the general diffusion process was established: the bond-based nonlocal diffusion operator based on the radial-type kernel function $\gamma(|\mathbf{z}|)$ with a compact support $B_{\delta }({\bf 0})$ converges to $-\nabla \cdot (\mathbf{A}\, \nabla)$ as  the horizon parameter $\delta \rightarrow 0$ under suitable conditions on  $\gamma(|\mathbf{z}|)$. However, to the best of our knowledge, such kernel function $\gamma(|\mathbf{z}|)$  has not been found in the literature when $\mathbf{A}$ is  anisotropic.
In this paper, we propose a novel bond-based nonlocal model which can simulate both isotropic and anisotropic diffusion processes.  Our approach
incorporates the coefficient matrix into a covariance matrix and then forms the  kernel function based on the multivariate Gaussian function. 
To ensure computational efficiency in practice, we further appropriately truncate the influence region of the kernel function  by considering its rapidly decaying nature.
We  establish the well-posedness of the proposed model under reasonable assumption on the coefficient matrix, and derive its maximum principle and when $\mathbf{A}$ is a constant matrix, the mass conservation. An efficient linear collocation discretization scheme is also proposed for numerical solution of our model, which is numerically shown to converge  exponentially  on uniform rectangular grids and be second-order accurate on  non-uniform grids. 
We further demonstrate through extensive experiments that the proposed collocation scheme  is effectively asymptotically compatible for solving  our nonlocal diffusion model, while the classic quadrature-based finite difference scheme \cite{DuTao2019,DuJuTi2017} fails  in the case of anisotropic coefficients.   

The rest of the paper is organized as follows. Section 2 briefly reviews some existing nonlocal  models for isotropic and anisotropic diffusion. Section 3 introduces the new bond-based nonlocal  diffusion model with matrix-valued coefficients and discusses the truncation of the influence region of the kernel function for its implementation in practice. Section 4 establishes the well-posedness of   the proposed model and derive some of its properties. Section 5 presents the linear collocation discretization scheme for numerical solution of our model. Section 6 provides extensive numerical experiments in two and three dimensions  to illustrate application of our model to various  isotropic and anisotropic diffusion problems  and to demonstrate numerical accuracy and effective asymptotic compatibility of the proposed collocation scheme. Some conclusions are finally given in Section 7.

\section{Review of existing nonlocal diffusion models}

\subsection{Nonlocal modeling for isotropic diffusion} 
Within the framework of nonlocal vector calculus \cite{DuGu2013}, given $\boldsymbol{\nu}(\x , \y ):\mathbb{R}^d \times \mathbb{R}^d \rightarrow \mathbb{R}^d$, the {\em unweighted nonlocal  divergence operator} $\mathcal{D}$ acting on $\boldsymbol{\nu}$ is defined as
\begin{equation}
\mathcal{D}  \boldsymbol{\nu}(\x):=\int_{\mathbb{R}^d}(\boldsymbol{\nu}(\x, \y)+\boldsymbol{\nu}(\y , \x )) \cdot \boldsymbol{\alpha}(\x, \y) \,d \y
\end{equation}
where $ \boldsymbol{\alpha}(\x , \y ): \mathbb{R}^d\times \mathbb{R}^d \rightarrow \mathbb{R}^d$ is a pre-determined antisymmetric vector, i.e. $\boldsymbol{\alpha}(\x , \y )=-\boldsymbol{\alpha}(\y , \x )$.  For any $u(\x):\mathbb{R}^d\rightarrow \mathbb{R}$, the unweighted adjoint (i.e., nonlocal gradient) operator $\mathcal{D}^{*}$ corresponding to $\mathcal{D}$ is then defined as
\begin{equation}
\mathcal{D}^{*} u(\x, \y):=(u(\y)-u(\x)) \boldsymbol{\alpha}(\x, \y).
\end{equation}
Let ${\Theta}(\x,\y):\mathbb{R}^d \times \mathbb{R}^d \rightarrow \mathbb{R}^{d\times d}$ be a  symmetric positive definite matrix-valued function. Under the above definitions of $\mathcal{D}$ and $\mathcal{D}^{*}$, we have
\begin{equation}\label{infl_conv}
\mathcal{D}(\Theta\cdot\mathcal{D}^{*}u)(\x)= 2\int_{\mathbb{R}^d}(u(\y)-u(\x)) \alpha(\x, \y) \cdot({\Theta}(\x, \y) \cdot \boldsymbol{\alpha}(\x, \y)) \,d \y.
\end{equation}
If we define the kernel function $\gamma(\x,\y) = 2\alpha \cdot(\Theta\cdot \alpha)$, which is clearly  nonnegative and symmetric, then (\ref{infl_conv}) also can be written as
\begin{equation}\label{gamm}
\mathcal{D}(\Theta\cdot\mathcal{D}^{*}u)(\x) =\int_{\mathbb{R}^d} (u(\y)-u(\x)){\gamma}(\x,\y) \,d \y.
\end{equation}
Let us take 
$$\boldsymbol{\alpha}(\x,\y)=\frac{\y-\x}{\|\y-\x\|},\quad\Theta(\x,\y)=a(\x)\omega(\x,\y)\mathbf{I},$$
where $a(\x): \mathbb{R}^d\rightarrow  \mathbb{R}$ is the scalar-valued diffusion coefficient  and  $\omega(\x,\y) = \omega(\| \y- \x \|):  \mathbb{R}^d\times  \mathbb{R}^d\rightarrow  \mathbb{R}$ is the influence function with a compact support $B_{\delta}(\x)=\{\y: \|\y-\x\|\leq \delta \}$ (the horizon parameter $\delta>0$). Then we have
 a bond-based nonlocal isotropic diffusion model as follows:
\begin{eqnarray}\label{bond_aniso}
\mathcal{D}(\Theta\cdot\mathcal{D}^{*}u)(\x)&=&\int_{\mathbb{R}^d}(u(\y)-u(\x))\frac{(\y-\x)^T}{\|\y-\x\|} \cdot (a(\x)\mathbf{I}+a(\y)\mathbf{I})\cdot \frac{\y-\x}{\|\y-\x\|} \omega(\x,\y)\,d\y \nonumber\\
&=&\int_{B_{\delta}(\x)}(u(\y)-u(\x)) \gamma(\x,\y) \,d\y. 
\end{eqnarray}
where $\gamma(\x,\y) = (a(\x)+a(\y)) \omega(\x,\y)$.
It is shown \cite{DuZh2011} that  as $\delta$ goes to 0, the nonlocal diffusion operator, $\mathcal{D}(\Theta\cdot\mathcal{D}^{*}u)$, converge to the classic isotropic diffusion operator in divergence form, $\nabla\cdot (a(\x) \nabla {u}$).

\subsection{Nonlocal modeling  for anisotropic diffusion}
A $\omega$-weighted nonlocal divergence operator  \cite{DuGu2013} acting on $\boldsymbol{\mu}(\x):\mathbb{R}^{d}\rightarrow\mathbb{R}^d$ is defined as
\begin{eqnarray}
\mathcal{D}_{\omega}\boldsymbol{\mu}(\x):=\mathcal{D}(\omega\boldsymbol{\mu})(\x).
\end{eqnarray}
The  $\omega$-weighted nonlocal  gradient operator $\mathcal{D}_{\omega}^{*}$ corresponding to $\mathcal{D}_{\omega}$ is correspondingly defined as
\begin{eqnarray}
\mathcal{D}_{\omega}^{*}u(\x):=\int_{\mathbb{R}^{d}}\omega(\x,\y)\mathcal{D}^{*}u(\x,\y) \,d \y. 
\end{eqnarray}
Then a state-based  nonlocal anisotropic diffusion model  can be defined as follows:
\begin{eqnarray}\label{stat_aniso}
\mathcal{D}_{\omega}(\mathbf{A}\cdot\mathcal{D}^{*}_{\omega}u)(\x)&=&\int_{\mathbb{R}^d} \big[\mathbf{A}(\x) \int_{\mathbb{R}^d} \omega(\x, \z) \mathcal{D}^* u(\x, \z) d \z \nonumber\\
&&\quad\quad+\mathbf{A} (\y)\int_{\mathbb{R}^d} \omega(\boldsymbol{\y}, \boldsymbol{\z}) \mathcal{D}^* u(\y, \boldsymbol{\z}) d \boldsymbol{\z})\big] \cdot \alpha(\boldsymbol{\x}, \boldsymbol{\y})\omega(\x, \y) d\boldsymbol{\y}.
\end{eqnarray} 
Note that the state-based model includes a two-fold integral while the bond-based one doesn't, thus
the bond-based one is usually more efficient in terms of computational cost.
With Taylor expansion, one can show that as $\delta$ goes to 0, the  nonlocal diffusion operator, $\mathcal{D}_{\omega}(\mathbf{A}(\x)\cdot\mathcal{D}^{*}_{\omega}u)$, converges to the classic  anisotropic diffusion operator in divergence form, $\nabla\cdot (\mathbf{A}(\x) \nabla{u}$).
It has been pointed out that under the suitable conditions on the kernel function $\gamma(\x,\y)$, the bond-based nonlocal model \eqref{gamm} also can be employed to simulate anisotropic diffusion. Assume that the kernel function is radially symmetric (i.e., $\gamma(\x,\y)=\gamma(|\y-\x|)$)  and  satisfies
\begin{equation}
a^{i,j}=\lim_{\delta\rightarrow 0}\int_{B_{\delta}(0)} \gamma(|\mathbf{z}|) z_i z_j d \mathbf{z}, \quad \text { for } \quad i, j=1,2, \ldots, d,
\end{equation}
then the nonlocal operator \eqref{gamm} converges to the classic diffusion operator  $\nabla\cdot (\mathbf{A} \nabla {u}$) as $\delta$ goes to 0 \cite{DuGu2013}.
Unfortunately, such a suitable kernel function has not yet been successfully constructed (i.e., appropriate choice of $\boldsymbol{\alpha}$ and $\Theta$) in the literature.

\section{A novel bond-based  nonlocal diffusion model with matrix-valued coefficients}
In this section, we will first present the  bond-based  nonlocal diffusion model and then establish its well-posedness and some of its properties.

\subsection{A nonlocal operator based on the Gaussian-type kernel function}

Let $p(\z,\boldsymbol{\mu},\boldsymbol{\Sigma})$ be the probability density function of  the $d$-dimensional multivariate  Gaussian (or normal) distribution with  expectation vector $\boldsymbol{\mu}$ and covariance matrix $\boldsymbol{\Sigma}$, which is defined by
\begin{equation}
p(\z,\boldsymbol{\mu},\Sigma) =\frac{1}{ \sqrt{{(2\pi)}^{d} \left|\boldsymbol{\Sigma}\right|}} \text{exp}\left(-\frac{1}{2} (\z-\boldsymbol{\mu})^{T} \boldsymbol{\Sigma}^{-1} (\z-\boldsymbol{\mu})\right).
\end{equation}
We first propose and study a  nonlocal diffusion operator with the coefficient matrix $\mathbf{A}$ defined as follows:
\begin{equation}\label{expnon}
\mathcal{L}_{\delta} u(\x):=\int_{\mathbb{R}^d} (u(\y)-u(\x))\gamma(\x,\y) \,d \y,\quad\,\x\in \mathbb{R}^d,
\end{equation}
where the kernel function ${\gamma(\x,\y)}$  is defined as
\begin{equation}\label{ndf}
{\gamma(\x,\y)}=\frac{2}{\delta^{2}} p(\y-\x,{\bf 0},\delta^{2} \mathbf{A} (\x)).
\end{equation} 

It is clear that ${\gamma(\x,\y)}>0$ and has an unbounded support region for any $\x\in\mathbb{R}^d$.
In stead of acting directly as the horizon's radius in the traditional bond-based nonlocal models, here the parameter $\delta$  is  used in constructing the covariance matrix. 
Note that as $\delta$ gets smaller, the kernel function $\gamma(\x,\y)$ would get  more singular, as illustrated in Figure \ref{delta} for some two dimensional cases. Another important difference with traditional nonlocal models is that the  coefficient matrix $\mathbf{A}(\x)$ is directly incorporated into and impact the kernel function, as illustrated in Figure \ref{coemat}. Although the influence region of ${\gamma(\x,\y)}$ for a point $\x$ is the whole space $\x\in\mathbb{R}^d$ theoretically, but the magnitude of the interaction between $\x$ and  other point $\y$ quickly shrinks to 0 when $\x$ and $\y$ get aways with each other. 

Let us denote the domain in which the nonlocal operator is applied by $\Omega_s$, and then the  nonlocal diffusion problem associated with the operator 
$\mathcal{L}_{\delta}$ under the Dirichlet-type boundary condition is  given by:
\begin{eqnarray} \label{dbc-org}
\left\{\begin{aligned}
 -\mathcal{L}_{\delta} u(\x ) &=f(\x),& &\quad {\x} \in \Omega_s, \\
u(\x) & =g(\x), & & \quad{\x} \in \mathbb{R}^d\setminus\Omega_s.
\end{aligned}\right.
\end{eqnarray}

\begin{figure}[!ht]
\centering{
\includegraphics[width=.48\textwidth]{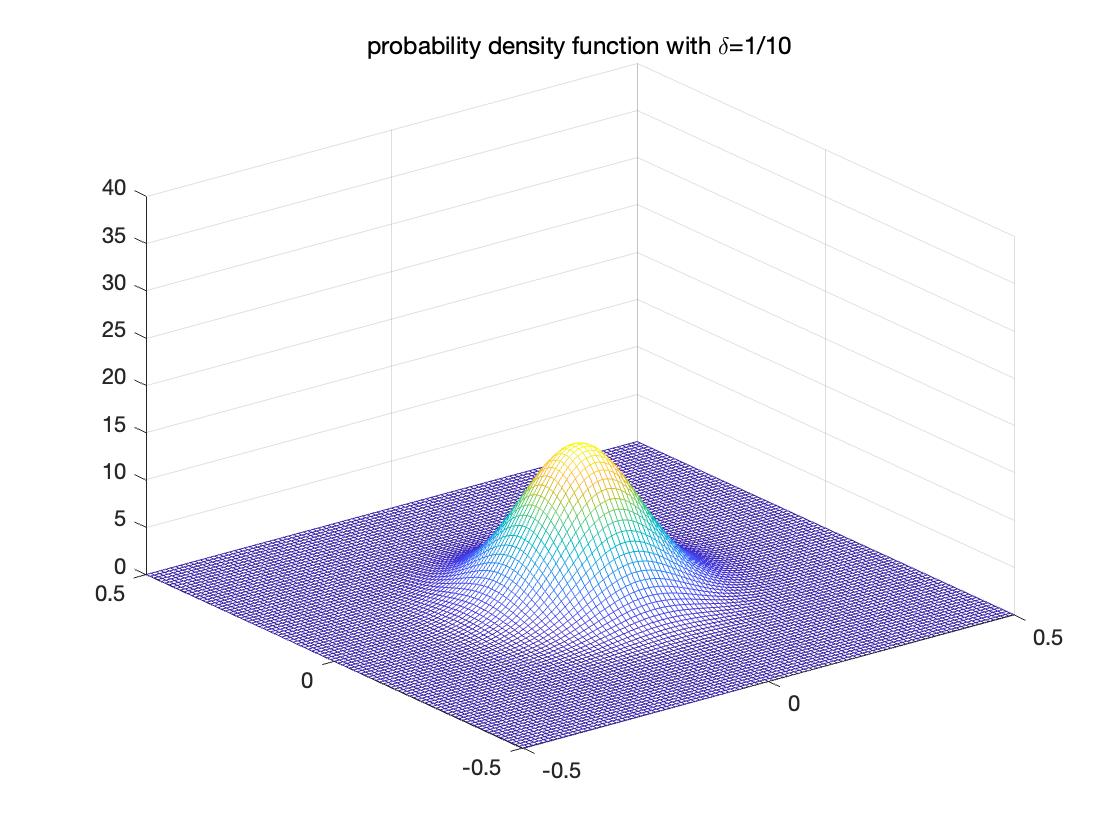}
\includegraphics[width=.48\textwidth]{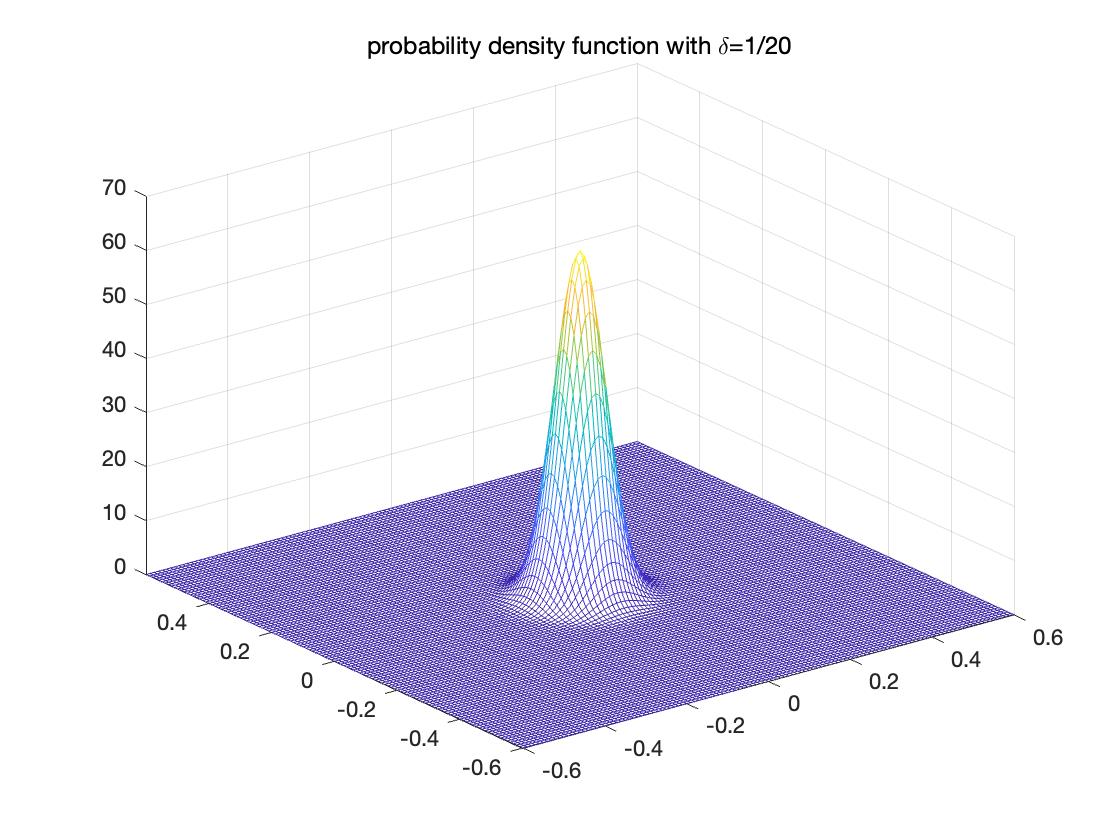}}
\caption{\em Plots of the  Gaussian function $p(\x,{\bf 0},\delta^{2} \mathbf{A})$  (in two dimensions) with  the isotropic constant coefficient matrix $\mathbf{A}=[1,0; 0,1]$ and  $\delta=1/10$ (left) and $\delta=1/20$ (right), respectively.  A smaller $\delta$ results in contours that are more elongated and concentrated around the center, while a larger $\delta$ leads to contours that are more spread out and diffuse.}
\label{delta}
\end{figure}

\begin{figure}[!ht]
\centering{
\includegraphics[width=.48\textwidth]{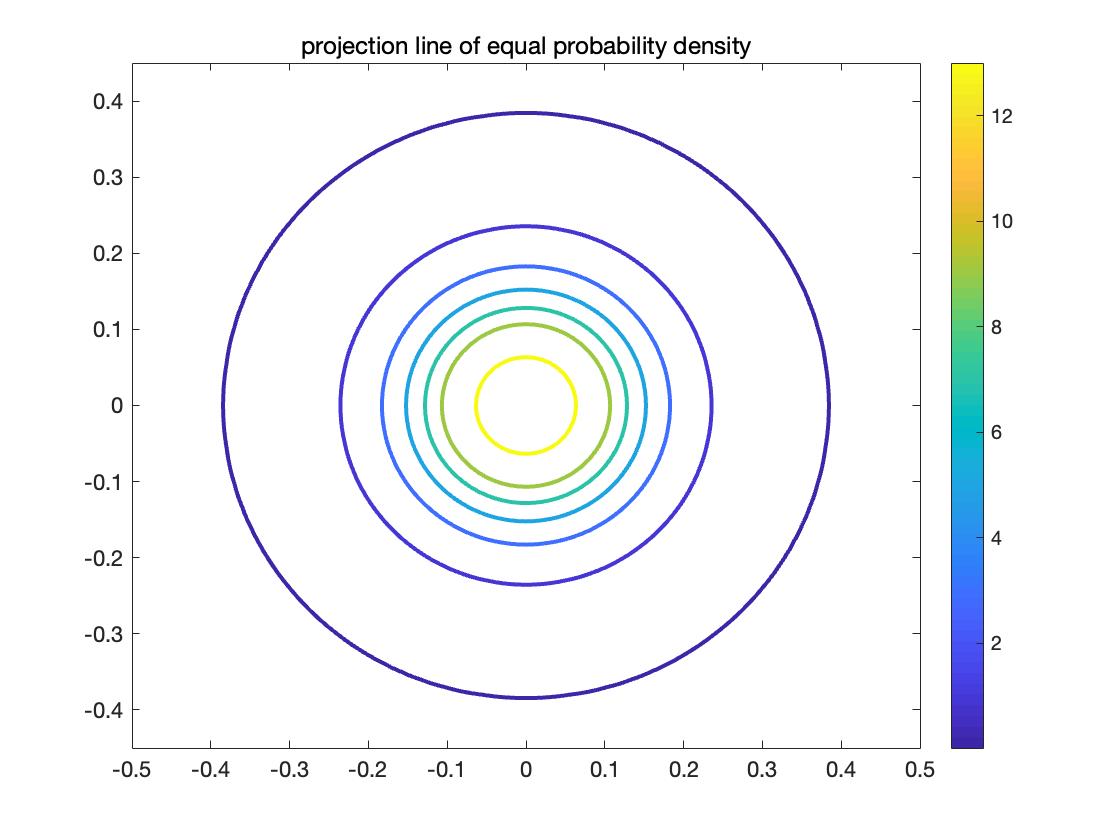}
\includegraphics[width=.48\textwidth]{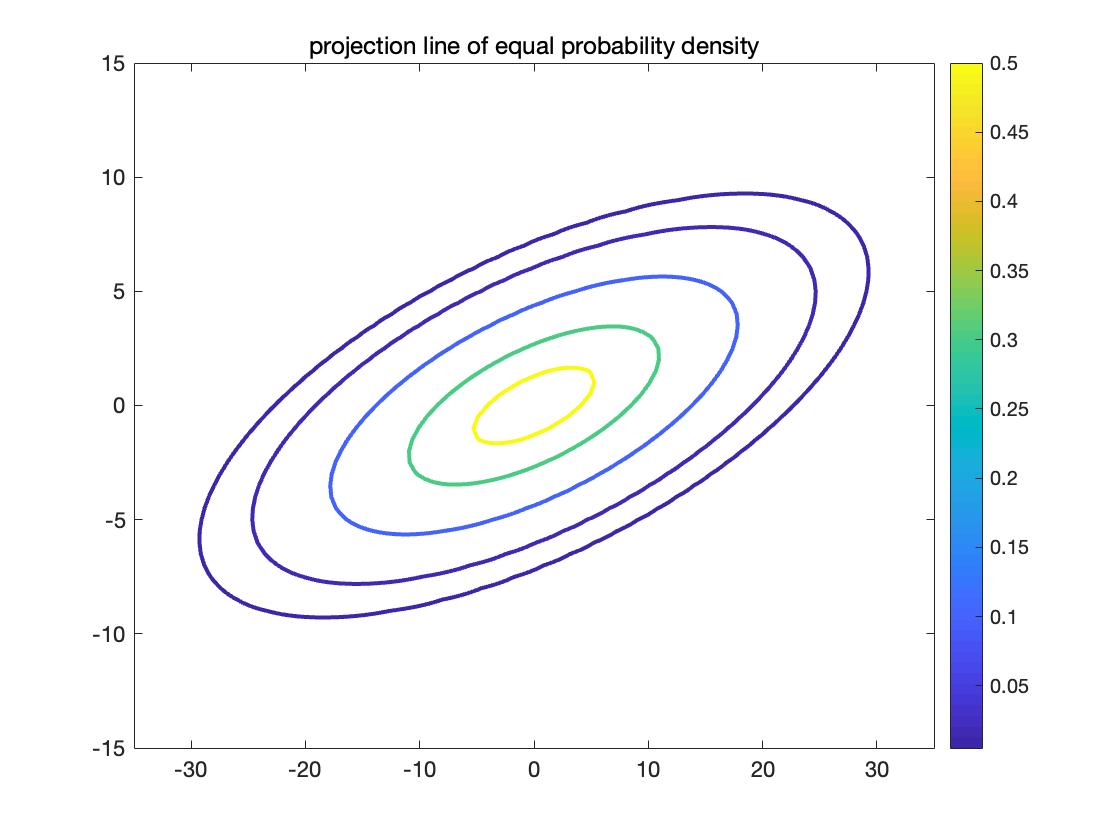}}
\caption{\em When the diffusion coefficient matrix $\mathbf{A} =[1,0;0,1]$, the probability density function is isotropic and thus the projection of its iso-density contours onto the coordinate plane takes the form of a circle (left).
When the diffusion coefficient matrix $\mathbf{A}=[10,2;2,1]$, the probability density function is anisotropic and subjected to a combined stretching and rotation, resulting in the rotated image of an elliptical shape upon projection onto the coordinate plane (right).}
\label{coemat}
\end{figure}

\subsection{Convergence to the local diffusion operator in non-divergence form}

Suppose the solution $u$ is sufficiently smooth. By applying Taylor expansion at $\x\in\Omega_s$, we obtain any $y\in\mathbb{R}^d$,
\begin{equation} \label{mat_h}
\begin{aligned}
u(\y)-u(\x)=\;&\nabla u(\x)^T (\y-\x)+\frac{1}{2!}(\y-\x)^{T}\mathbf{H}_u(\x) (\y-\x)\\
&+\sum_{p, q, r=1}^d \frac{1}{3 !} \frac{\partial^3 u\left(\x \right)}{\partial x_p \partial x_q \partial x_r} (\y-\x)_p (\y-\x)_q (\y-\x)_r+\cdots,
\end{aligned}
\end{equation}
where the matrix $\mathbf{H}_u$ is the  Hessians of  $u$. According to the definition of moments for the multivariate Gaussian distribution \cite{BlHw2019}, it is easy to see that the integrals of the first and third terms of the right-hand side of \eqref{mat_h} are 0, i.e.,
\begin{equation}\label{111}
\int_{\mathbb{R}^d}{\nabla u(\x)^T (\y-\x)\gamma(\x,\y)}\,d \y=0,
\end{equation}
and
\begin{equation}\label{222}
\int_{\mathbb{R}^d}{\sum_{p, q, r=1}^d \frac{1}{3 !} \frac{\partial^3 u\left(\x \right)}{\partial x_p \partial x_q \partial x_r} (\y-\x)_p (\y-\x)_q (\y-\x)_r\gamma(\x,\y)}d\y=0.
\end{equation}
For the integral of the second term of the right-hand side of \eqref{mat_h}, we have the following equality:
\begin{equation}\label{333}
\dfrac{1}{2}\int_{\mathbb{R}^d} {(\y-\x)_i(\y-\x)_j}\gamma(\x,\y)d\y=a^{i,j}(\x),\quad  1\leq i, j \leq d.
\end{equation}
Finally, the nonlocal operator defined in \eqref{expnon} can be written as:
\begin{equation}\label{modelerr}
\mathcal{L}_{\delta} u(\x)= \sum_{i,j=1}^{d}a^{i,j}(\x)\frac{\partial^2 u}{\partial x_i \partial x_j}\left(\x\right)+O(\delta^2).
\end{equation}
Hence, as the horizon parameter ${\delta}\rightarrow 0$ , the nonlocal operator ${\mathcal{L}_{\delta}}$ converges to the local diffusion operator in non-divergence form defined in \eqref{locmod}, i.e.,
$$\mathcal{L}_{\delta} u(\x) \rightarrow \mathcal{L} u(\x) :=\sum_{i,j=1}^{d}a^{i,j}(\x)\frac{\partial^2 u}{\partial x_i \partial x_j}\left(\x\right).$$
Furthermore, the convergence is expected to be quadratically with respect to $\delta$.
Thus the partial differential equation problem, as the local counterpart of the nonlocal diffusion problem {\eqref{dbc-org}}, is given by
\begin{eqnarray} \label{dbc-pde}
\left\{\begin{aligned}
 -\mathcal{L} u(\x ) &=f(\x),& &\quad {\x} \in \Omega_s, \\
u(\x) & =g(\x), & &\quad {\x} \in \partial\Omega_s,
\end{aligned}\right.
\end{eqnarray}
In the case of  constant  coefficient matrix $\mathbf{A}$, we further have
$\mathcal{L}_{\delta} u\rightarrow  \nabla \cdot(\mathbf{A}  \nabla) u$ since $\frac{\partial \mathbf{A}}{\partial x_i}\equiv {\bf 0}$.

\subsection{Truncation of the influence region for the kernel function}

The nonlocal model \eqref{expnon} utilizes a kernel function $\gamma(\x,\y)$ for $\mathcal{L}_{\delta}$ defined over an unbounded area, which  presents computational challenges for practical implementation. However, due to the rapid (exponentially) decay of the kernel function $\gamma$ when $\y$ gets away from the given point $\x\in\Omega_s$, it is feasible to truncate the influence region of $\gamma(\x,\y)$ to ensure computational efficiency in practice.
By choosing a suitable cut-off distance, one can effectively limit the computational domain to a finite region with just negligible  loss of the model  accuracy. This way allows for the application of the proposed nonlocal model in a more practical manner, while still capturing the essential physics of the system under consideration.

As is known that with any fixed $\x$, if $\y-\x$ follows the $d$-dimensional mean-zero Gaussian distribution with covariance matrix 
$\delta^{2} \mathbf{A} (\x)$, it holds that $(\y-\x)^{T} \mathbf{A(\x)}^{-1} (\y-\x)/\delta^2$ obey the chi-square distribution $\chi^2(d)$ {\cite{BlHw2019}}. If we take the influence region of  the kernel function at $\x$ to just include all $\y$ such that
$(\y-\x)^{T} \mathbf{A(\x)}^{-1} (\y-\x) \leq \delta^2\chi_\alpha^2(d),$ where {$0<\alpha\ll 1$} is a predetermined constant parameter and $\chi^2_{\alpha}(d)$ denotes the $(1-\alpha)$ quantile of the chi-square distribution, then we have
\begin{equation}\label{3}
\underset{(\y-\x)^{T} \mathbf{A(\x)}^{-1} (\y-\x) \leq \delta^2\chi_\alpha^2(d)} {\idotsint}p(\y-\x,{\bf 0},\delta^2\mathbf{A(\x)} )\; d\y=1-\alpha.
\end{equation}
We will take $$B_{\delta,{\bf A},\alpha}(\x) = \{\y\;|\; (\y-\x)^{T} \mathbf{A}(\x)^{-1} (\y-\x)\leq \delta^2\chi_\alpha^2(d)\}$$ as the truncated influence region for $\x$ with $\alpha$ selected very close to 0. 
Figure \ref{tuxing} illustrates the projection of iso-density contour of the truncated influence region onto the coordinate plane for different coefficient matrices in two dimensions,
where $\chi_\alpha^2(2)=36$, that is $\alpha\approx 1.52\times10^{-8}$.  For the identity matrix, the contour takes on a circular shape, as shown on the left-hand side of the figure.
For other matrices, such as those involving stretching and rotation, the contours may appear as rotated ellipses. 
Overall, the shape of the iso-density contours provides insight into the nature of the underlying probability density function for the kernel function and the effect of the transformation matrix on its properties. Thus, we correspondingly define the  truncated kernel function $\gamma_{\alpha}$ as: for any $\x,\y\in\mathbb{R}^d$,
\begin{equation}\label{newker}
\gamma_{\alpha}(\x,\y) =\left\{
\begin{aligned}
\gamma(\x,\y),& \qquad \y\in B_{\delta,{\bf A},\alpha}(\x),\\
0,&\qquad {\rm otherwise.}
\end{aligned} \right.
\end{equation} 
Note that the identities \eqref{111} and \eqref{222} still hold exactly  for the above truncated kernel function $\gamma_{\alpha}(\x,\y)$, but the equality \eqref{333} doesn't and its error depends on the choice of $\alpha$ and  converges to 0 as $\delta$ goes to 0. 

\begin{figure}[h]
\centering{
\subfigure[\label{fig:a}]{
	\begin{minipage}[t]{.22\linewidth}
		\centering
		\includegraphics[width=.60\linewidth]{./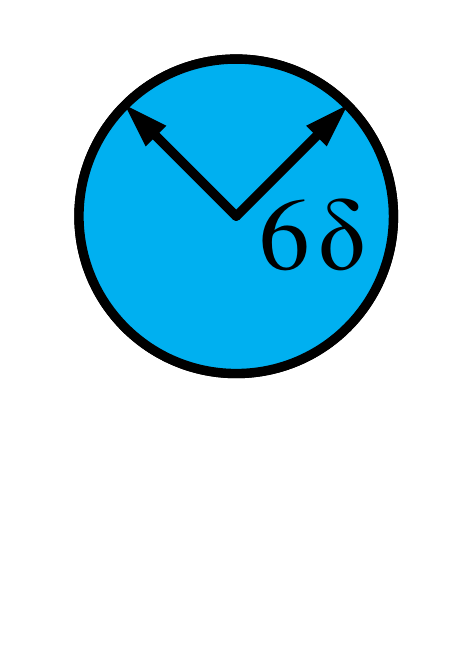}
	\end{minipage}
}
\hspace{-4mm}
\subfigure[\label{fig:b}]{
	\begin{minipage}[t]{.35\linewidth}
		\centering
		\includegraphics[width=1.1\linewidth]{./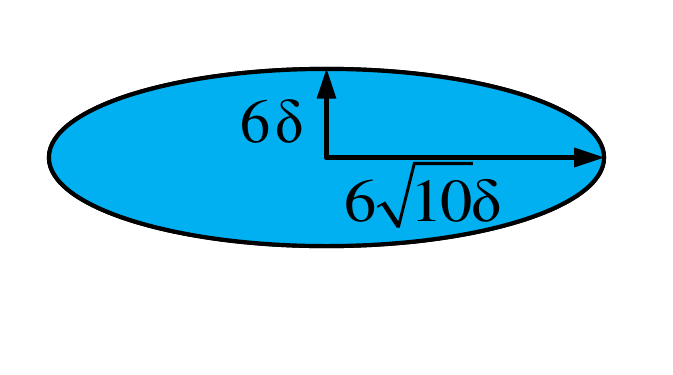}
	\end{minipage}
}
\hspace{5.5mm}
\subfigure[\label{fig:c}]{
	\begin{minipage}[t]{.32\linewidth}
		\centering
		\includegraphics[width=1.1\linewidth]{./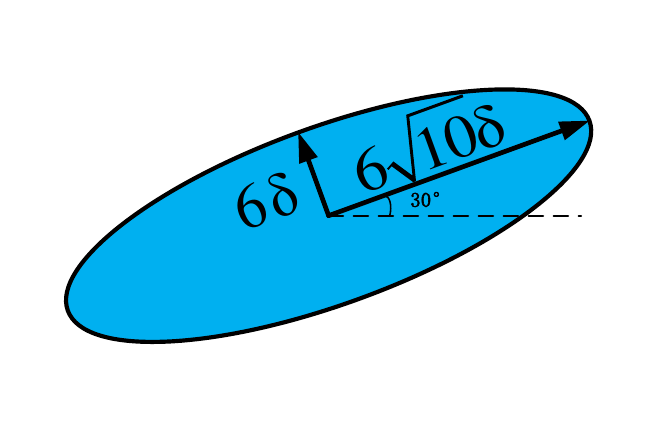}
	\end{minipage}
}
}
\caption{\em Illustration of the truncated influence region $B_{\delta,{\bf A},\alpha}({\bf 0})$  with $\chi_\alpha^2(2)=36$ {(i.e., $\alpha\approx 1.52\times10^{-8}$)}  in two dimensions. Left: the identity coefficient matrix $\mathbf{A}$=[1,0; 0,1], the truncated influence region  is a disk with radius $6\delta$; middle: an anisotropic coefficient matrix $\mathbf{A}$=[10, 0; 0,1], the truncated influence region  is an elliptic region whose semi-major axis is $6\sqrt{10} \delta$ and semi-minor axis is $6\delta$; right: $\mathbf{A}=[31/4,-9\sqrt{3}/4;-9\sqrt{3}/4,13/4]$, the truncated influence region of is a rotated (counterclockwisely $30^{\circ}$) elliptic region whose semi-major axis is $6\sqrt{10} \delta$ and semi-minor axis is $6\delta$.}
\label{pic3}
\label{tuxing}
\end{figure}

In the context of nonlocal operators,  it is common to impose constraints on certain interaction domain $\Omega_c$, which has a positive volume.  These volume constraints are the natural extensions of boundary conditions commonly used in differential equation problems.
By specifying appropriate volume constraints, one can effectively control the behavior of the nonlocal operator in $\Omega_s$, thereby ensuring the desired properties of the solution. For example, such constraints may include restrictions on the overall quality or energy of the system, or restrictions on the nonlocal kernel function support. In this way, volume constraints offer a powerful tool for designing and optimizing nonlocal operator models, and have wide-ranging applications in the study of physical systems and processes. Given the domain $\Omega_s \in \mathbb{R}^d$, we define the corresponding interaction domain  associated with the kernel function $\gamma_{\alpha}$  as
$$\Omega_c:=\left\{\y \in \mathbb{R}^d \setminus \Omega_s \;|\;  \exists\, \x\in \Omega_s,\;\y\in B_{\delta,{\bf A},\alpha}(\x)\right\}.$$
and the corresponding truncated nonlocal diffusion operator as
\begin{equation}\label{expnon-mod}
\mathcal{L}_{\delta,\alpha} u(\x):=\int_{\mathbb{R}^d} (u(\y)-u(\x))\gamma_{\alpha}(\x,\y) \,d \y,\quad\,\x\in \Omega_s.
\end{equation}
We remark that  the nonlocal diffusion operator $\mathcal{L}_{\delta,\alpha} u(\x )$ is an approximation of the original  nonlocal diffusion operator $ \mathcal{L}_{\delta} u(\x )$, and  $\mathcal{L}_{\delta,\alpha} u(\x )$ converges to  $\mathcal{L}_{\delta} u(\x )$ quickly as $\alpha$ goes to 0.
Finally, we obtain the  corresponding bond-based nonlocal diffusion problem under the volume constraint-based Dirichlet boundary condition as follow:
\begin{eqnarray} \label{dbc}
\left\{\begin{aligned}
 -\mathcal{L}_{\delta,\alpha} u(\x )&=f(\x),& &\quad {\x} \in \Omega_s, \\
u(\x) & =g(\x), & &\quad {\x} \in \Omega_c,
\end{aligned}\right.
\end{eqnarray}
where $\Omega=\Omega_c \cup\Omega_s$.

\section{Wellposedness and properties}

We now study the well-posedness of the bond-based nonlocal diffusion model \eqref{dbc}.
Let us define the constrained space 
$L_{n0}^{2}(\Omega)=\{u\in L^{2}(\Omega)\,|\, u=0 \text{ on }\Omega_c\}$ and  denote by $(\cdot,\cdot)$ the $L^2$ inner product. Assume  $f\in L^2(\Omega_s)$. Then a weak formulation of the proposed nonlocal diffusion  model \eqref{dbc} with the homogeneous boundary value $g|_{\Omega_c}=0$ is given as follows: find $u\in L_{n0}^2(\Omega)$ such that 

\begin{eqnarray}\label{weak_form_3}
{\bf B}(u,v)={\bf F}(v),\quad \forall\; v\in L_{n0}^2(\Omega)\,,
\end{eqnarray}
where $${\bf B}(u,v)=(-\mathcal{L}_{\delta,\alpha} u,v)=\displaystyle\int_{\Omega} \int_{\Omega} (u(\x)-u(\y))v(\x){\gamma_{\alpha}}(\x,\y)\, d \y d\x$$ and $$ {\bf F}(v)=\int_\Omega f(\x)v(\x)\,d\x.$$
Since $\gamma_{\alpha}(\x,\y)$ is a truncated, scaled probability density function as defined in \eqref{newker} and the coefficient matrix ${\bf A}$ is elliptic,  it is not hard to see that the kernel $\gamma_\alpha$ satisfies:  for any $\x\in\Omega$, 
\begin{equation}\label{kercondition1}
\begin{aligned}
	&\int_\Omega \gamma_{\alpha}(\x,\y)\,d\y\leq \frac{2}{\delta^2}, \quad\int_\Omega \gamma_{\alpha}(\y,\x)\,d\y\leq K_1(\delta),\\[4pt]
	&\int_{\Omega_c} \frac{\gamma_{\alpha}(\x,\y)+\gamma_{\alpha}(\y,\x)}{2}d\y\geq K_2(\delta),
\end{aligned}
\end{equation}
where  $K_1(\delta)$ and $K_2(\delta)$ are two positive constants depending on $\delta$.
Note that $\gamma_{\alpha}(\x,\y)$ isn't symmetric  except when ${\bf A}(\x)$ is a constant matrix, thus the bilinear operator ${\bf B}(u,v)$ is not symmetric in general.
We then obtain the following result on the well-posedness of the  nonlocal diffusion problem \eqref{dbc}. 

\begin{thm}\label{well-posedness}{\rm (Well-posedness)} 
Let the parameter $\delta>0$ be fixed. Assume that the kernel function $\gamma_{\alpha}(\x,\y)$ satisfies
	\begin{equation}\label{kercondition2}
	\int_{\Omega} \frac{\gamma_{\alpha}(\x,\y)-\gamma_{\alpha}(\y,\x)}{2}\,d\y\geq 0,\quad \forall\,\x\in\Omega.
	\end{equation}
	 Then there exists a unique solution $u\in L_{n0}^{2}(\Omega)$ to the nonlocal diffusion problem \eqref{dbc}. Furthermore, the solution satisfies the a priori estimate
	\begin{equation}\label{weak7} \|u\|_{L^2(\Omega)}\leq\frac{2}{K_2(\delta)} \|f\|_{L^2(\Omega)}.
	\end{equation}
\end{thm}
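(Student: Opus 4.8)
The plan is to work with the variational formulation \eqref{weak_form_3} and apply the Lax--Milgram theorem (valid for non-symmetric bilinear forms) on the Hilbert space $L_{n0}^2(\Omega)$, which is a closed subspace of $L^2(\Omega)$ and hence itself a Hilbert space. Three things must be checked: continuity of the functional ${\bf F}$, continuity of the bilinear form ${\bf B}$, and coercivity of ${\bf B}$ on $L_{n0}^2(\Omega)$. Continuity of ${\bf F}$ is immediate from Cauchy--Schwarz, $|{\bf F}(v)|\le\|f\|_{L^2(\Omega)}\|v\|_{L^2(\Omega)}$. For continuity of ${\bf B}$, I would split $u(\x)-u(\y)$ into the two pieces $u(\x)$ and $-u(\y)$: the $u(\x)$ piece is controlled by $\int_\Omega\gamma_\alpha(\x,\y)\,d\y\le 2/\delta^2$ together with Cauchy--Schwarz, and the $u(\y)$ piece by Cauchy--Schwarz in $(\x,\y)$ followed by the two bounds in \eqref{kercondition1} on $\int_\Omega\gamma_\alpha(\x,\y)\,d\y$ and on $\int_\Omega\gamma_\alpha(\y,\x)\,d\y$; this gives $|{\bf B}(u,v)|\le C(\delta)\|u\|_{L^2(\Omega)}\|v\|_{L^2(\Omega)}$ with $C(\delta)$ depending only on $\delta$ and $K_1(\delta)$.

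The main work is coercivity, where the asymmetry of $\gamma_\alpha$ has to be confronted. I would split $\gamma_\alpha=\gamma_s+\gamma_a$ into its symmetric part $\gamma_s(\x,\y)=\tfrac12(\gamma_\alpha(\x,\y)+\gamma_\alpha(\y,\x))$ and antisymmetric part $\gamma_a(\x,\y)=\tfrac12(\gamma_\alpha(\x,\y)-\gamma_\alpha(\y,\x))$, and treat the two contributions to ${\bf B}(u,u)$ separately. For the symmetric contribution, interchanging $\x$ and $\y$ in the double integral and averaging gives the standard identity
$$\int_\Omega\!\int_\Omega\big(u(\x)-u(\y)\big)\,u(\x)\,\gamma_s(\x,\y)\,d\y\,d\x=\frac12\int_\Omega\!\int_\Omega\big(u(\x)-u(\y)\big)^2\gamma_s(\x,\y)\,d\y\,d\x\ \ge\ 0.$$
For the antisymmetric contribution, expanding $(u(\x)-u(\y))u(\x)=u(\x)^2-u(\x)u(\y)$ and noting that the cross term $\int_\Omega\!\int_\Omega u(\x)u(\y)\gamma_a(\x,\y)\,d\y\,d\x$ changes sign under $\x\leftrightarrow\y$ (as $u(\x)u(\y)$ is symmetric and $\gamma_a$ is antisymmetric) and hence vanishes, we are left with
$$\int_\Omega\!\int_\Omega\big(u(\x)-u(\y)\big)\,u(\x)\,\gamma_a(\x,\y)\,d\y\,d\x=\int_\Omega u(\x)^2\left(\int_\Omega\gamma_a(\x,\y)\,d\y\right)d\x\ \ge\ 0,$$
where the inequality is precisely the hypothesis \eqref{kercondition2}. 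Adding, ${\bf B}(u,u)\ge\tfrac12\int_\Omega\!\int_\Omega(u(\x)-u(\y))^2\gamma_s(\x,\y)\,d\y\,d\x$.

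It then remains to bound this below by $\|u\|_{L^2(\Omega)}^2$, which is where the volume constraint enters: for $u\in L_{n0}^2(\Omega)$ we have $u\equiv 0$ on $\Omega_c$, so restricting the inner integral to $\y\in\Omega_c$ turns $(u(\x)-u(\y))^2$ into $u(\x)^2$, and the lower bound $\int_{\Omega_c}\gamma_s(\x,\y)\,d\y\ge K_2(\delta)$ from \eqref{kercondition1} yields the coercivity estimate ${\bf B}(u,u)\ge\tfrac{K_2(\delta)}{2}\|u\|_{L^2(\Omega)}^2$, using $\|u\|_{L^2(\Omega)}=\|u\|_{L^2(\Omega_s)}$. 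Lax--Milgram then gives a unique $u\in L_{n0}^2(\Omega)$ satisfying \eqref{weak_form_3}; taking $v=u$ and combining ${\bf B}(u,u)={\bf F}(u)\le\|f\|_{L^2(\Omega)}\|u\|_{L^2(\Omega)}$ with the coercivity estimate gives the a priori bound \eqref{weak7}. Finally, since for $\x\in\Omega_s$ the kernel $\gamma_\alpha(\x,\cdot)$ is supported in $\Omega$ and the test functions $v$ sweep out all of $L^2(\Omega_s)$ extended by zero, the weak identity forces $-\mathcal{L}_{\delta,\alpha}u=f$ a.e.\ in $\Omega_s$, so the weak solution is also a strong solution of \eqref{dbc} (with $u=0=g$ on $\Omega_c$). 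I expect the coercivity step to be the only genuine obstacle: isolating the antisymmetric part so that \eqref{kercondition2} applies, and extracting the $L^2$ lower bound from the volume constraint and $K_2(\delta)$; the continuity bounds and the Lax--Milgram bookkeeping are routine.
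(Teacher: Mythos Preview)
Your proposal is correct and follows essentially the same route as the paper's proof: both establish boundedness of ${\bf F}$ and ${\bf B}$ using the integral bounds in \eqref{kercondition1}, then prove coercivity by splitting $\gamma_\alpha$ into its symmetric and antisymmetric parts, using the standard symmetrization identity on the first, the vanishing of the cross term plus \eqref{kercondition2} on the second, and finally the restriction $\y\in\Omega_c$ together with $u|_{\Omega_c}=0$ and the $K_2(\delta)$ bound to extract the $L^2$ lower bound before invoking Lax--Milgram. The only addition in your write-up is the closing remark that the weak solution is in fact a strong solution of \eqref{dbc}, which the paper does not spell out.
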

\begin{proof}
	First, it is easy to see that $\mathbf{F}$ is a bounded linear functional:
	\begin{equation}\label{weak71}
		|\mathbf{F}(v)| \leq\|f\|_{L^2(\Omega)}\|v\|_{L^2\left(\Omega_s\right)}.
		\end{equation}

	We now show that the  bilinear operator {\bf B}($\cdot,\cdot$)  is bounded on $L_{n0}^2(\Omega)\times L_{n0}^2(\Omega)$ by using a similar proof as that of Theorem 2 in \cite{DuJuTi2017}. For any $u,v \in L_{n0}^2(\Omega)$, it holds
	\begin{eqnarray}\label{weak_form_5}
		{\bf B}(u,v)=\int_{\Omega}\int_{\Omega} u(\x)v(\x)\bd{\gamma}_{\alpha}(\x,\y)d\y d\x-\int_{\Omega}\int_{\Omega} u(\y)v(\x)\bd{\gamma}_{\alpha}(\x,\y)d\y d\x:= I_1-I_2.
	\end{eqnarray}
Using the inequality in (\ref{kercondition1}) and the Cauchy-Schwartz inequality, the first term of the right-hand side of (\ref{weak_form_5}) satisfies
	\begin{equation*}
	\begin{aligned}
		|I_1|& =\Big|\int_{\Omega}u(\x)v(\x)\int_{\Omega} \bd{\gamma}_\alpha(\x,\y) \,d \y d\x \Big|
		\leq  \frac{2}{\delta^2}\|u\|_{L^2(\Omega)}\|v\|_{L^2(\Omega)},
\end{aligned}
\end{equation*}
and the second term of the right-hand side of (\ref{weak_form_5}) satisfies
	\begin{equation*}
	\begin{aligned}
		|I_2| &\leq \Big(\int_{\Omega}\int_{\Omega} u^2(\y)\bd{\gamma}_\alpha(\x,\y)\,d \y d\x\Big)^{\frac12}\Big(\int_{\Omega}\int_{\Omega} v^2(\x)\bd{\gamma}_\alpha(\x,\y)\,d \y d\x\Big)^{\frac12}\\[3pt]
		&\leq  \frac{\sqrt{2K_1(\delta)}}{\delta}\|u\|_{L^2(\Omega)}\|v\|_{L^2(\Omega)}.
		\end{aligned}
	\end{equation*}
Thus we obtain
\begin{equation}
	{\bf B}(u,v)\leq|I_1|+|I_2|=\left(\frac{2}{\delta^2}+\frac{\sqrt{2 K_1(\delta)}}{\delta}\right)\|u\|_{L^2(\Omega)}\|v\|_{L^2(\Omega)}.
\end{equation}

Next we show that the bilinear operator {\bf B}($\cdot,\cdot$)  is coercive on $L_{n0}^2(\Omega)$. Note that
\begin{equation}\label{weak4}
\begin{aligned}
	{\bf B}(u,u)=&\int_{\Omega}\int_{\Omega} \left(u(\x) -u(\y)\right)u(\x)\bd{\gamma}_\alpha(\x,\y)\,d \y d\x\\
	=&\int_{\Omega}\int_{\Omega} \left(u(\x) -u(\y)\right)u(\x)\frac{\bd{\gamma}_\alpha(\x,\y)+\bd{\gamma}_\alpha(\y,\x)}{2}\,d \y d\x\\
	& +\int_{\Omega}\int_{\Omega} \left(u(\x) -u(\y)\right)u(\x)\frac{\bd{\gamma}_\alpha(\x,\y)-\bd{\gamma}_\alpha(\y,\x)}{2}\,d \y d\x := J_1+J_2.
\end{aligned}
\end{equation}
It is easy to see that
$\frac{\bd{\gamma}_\alpha(\x,\y)+\bd{\gamma}_\alpha(\y,\x)}{2}$ is always symmetric although $\gamma_\alpha(\x,\y)$ isn't.  Then the first term in the right-hand side of (\ref{weak4}) satisfies
\begin{equation}\label{u0}
\begin{aligned}
      J_1  &=\frac12\int_{\Omega}\int_{\Omega} \left(u(\x) -u(\y)\right)^2\frac{\bd{\gamma}_\alpha(\x,\y)+\bd{\gamma}_\alpha(\y,\x)}{2}\,d \y d\x \\
	&\geq \frac12\int_{\Omega}\int_{\Omega_c} \left(u(\x) -u(\y)\right)^2\frac{\bd{\gamma}_\alpha(\x,\y)+\bd{\gamma}_\alpha(\y,\x)}{2}\,d \y d\x\\
	 & = \frac12\int_{\Omega}u^2(\x)\int_{\Omega_c} \frac{\bd{\gamma}_\alpha(\x,\y)+\bd{\gamma}_\alpha(\y,\x)}{2}\,d \y d\x\geq\frac{K_2(\delta)}{2}\|u\|^2_{L^2(\Omega)}.
\end{aligned}
\end{equation}
Using the assumption \eqref{kercondition2} and the equality 
\begin{eqnarray*}
	\int_{\Omega}\int_{\Omega} u(\x)u(\y)\frac{\bd{\gamma}_\alpha(\x,\y)-\bd{\gamma}_\alpha(\y,\x)}{2}d\y d\x=0,
	\end{eqnarray*}
we obtain for the second term in the right-hand side of (\ref{weak4})
\begin{equation}\label{weak6}
\begin{aligned}
J_2&=\int_{\Omega}\int_{\Omega} u^2(\x)\frac{\bd{\gamma}_\alpha(\x,\y)-\bd{\gamma}_\alpha(\y,\x)}{2}\,d\y d\x
	-\int_{\Omega}\int_{\Omega} u(\x)u(\y)\frac{\bd{\gamma}_\alpha(\x,\y)-\bd{\gamma}_\alpha(\y,\x)}{2}\,d\y d\x\\
	&=\int_{\Omega}u^2(\x)\int_{\Omega_c} \frac{\bd{\gamma}_\alpha(\x,\y)-\bd{\gamma}_\alpha(\y,\x)}{2}\,d \y d\x\geq 0.
\end{aligned}
\end{equation}
The combination of \eqref{u0} and \eqref{weak6} gives us
$${\bf B}(u,u)\geq\frac{K_2(\delta)}{2}\|u\|^2_{L^2(\Omega)}$$

Thus, by the Lax-Milgram theorem, there exists a unique weak solution $u\in L_{n0}^2(\Omega)$ for the nonlocal diffusion problem \eqref{dbc}. Furthermore, since
\begin{equation}
	\frac{K_2(\delta)}{2}\|u\|_{L^2(\Omega)}^2 \leq \boldsymbol{B}(u, u)=|\mathbf{F}(u)| \leq\|f\|_{L^2(\Omega)}\|u\|_{L^2(\Omega)}
	\end{equation}
we have the a priori estimate (\ref{weak7}).
	\end{proof}

\begin{rem}
In the case of $a^{i,j}(\x)\in C^2(\Omega)$, notice that the classic diffusion operator in non-divergence form also can be written as
\begin{equation}
-\mathcal{L}u(\x):=-\sum^d_{i,j=1} a^{i,j}(\x) \frac{\partial^2 u}{\partial x_i \partial x_j}\left(\x\right)=-\nabla\cdot({\mathbf A}(\x)\nabla u(\x))+{\bf b}(\x)\cdot \nabla u(\x),
\end{equation}
where ${\bf b}(\x) = (b^i(\x))$ with $b^i(\x) =  \sum^d_{j=1}
\frac{\partial a^{i,j}}{\partial x_j}(\x)$.
Thus the assumption \eqref{kercondition2}  can be regarded as an analogue  in the nonlocal sense to the well-known
condition for the convection velocity
\begin{equation}\label{condp}
\nabla \cdot {\bf b}(\x)\leq  0, \quad\x\in\Omega.
\end{equation}
\end{rem}

\begin{lem}\label{Maximum Principle}{\rm (Maximum principle)} If $\mathcal{L}_{\delta,\alpha} u>0$ in $\Omega_s$, then a maximum of $u$ is only attained in the interaction domain $\Omega_c$.
\end{lem}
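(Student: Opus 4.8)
The plan is to argue by contradiction, using only the pointwise formula for $\mathcal{L}_{\delta,\alpha}$ together with the sign and support properties of the truncated kernel $\gamma_\alpha$. Suppose, to the contrary, that $u$ attains its maximum over $\Omega=\Omega_s\cup\Omega_c$ at some point $\x_0\in\Omega_s$, so that $u(\y)\le u(\x_0)$ for every $\y\in\Omega$. We will show this forces $\mathcal{L}_{\delta,\alpha}u(\x_0)\le 0$, contradicting the hypothesis.

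First I would observe that the integral defining $\mathcal{L}_{\delta,\alpha}u(\x_0)$ only involves values of $u$ on $\Omega$. Indeed, by \eqref{newker} the kernel $\gamma_\alpha(\x_0,\cdot)$ vanishes outside the truncated influence region $B_{\delta,{\bf A},\alpha}(\x_0)$, and by the very definition of the interaction domain $\Omega_c$ one has $B_{\delta,{\bf A},\alpha}(\x_0)\subseteq\Omega_s\cup\Omega_c=\Omega$ whenever $\x_0\in\Omega_s$ (any $\y\in B_{\delta,{\bf A},\alpha}(\x_0)$ is either in $\Omega_s$ or, being outside $\Omega_s$, lies in $\Omega_c$ via the choice $\x=\x_0$). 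Hence
\begin{equation*}
\mathcal{L}_{\delta,\alpha}u(\x_0)=\int_{B_{\delta,{\bf A},\alpha}(\x_0)}\bigl(u(\y)-u(\x_0)\bigr)\,\gamma_\alpha(\x_0,\y)\,d\y .
\end{equation*}

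Next I would use that $\gamma_\alpha(\x_0,\y)=\tfrac{2}{\delta^2}\,p(\y-\x_0,{\bf 0},\delta^2{\bf A}(\x_0))>0$ for $\y\in B_{\delta,{\bf A},\alpha}(\x_0)$, while $u(\y)-u(\x_0)\le 0$ there because $\x_0$ is a maximum point; therefore the integrand is nonpositive and $\mathcal{L}_{\delta,\alpha}u(\x_0)\le 0$. This contradicts $\mathcal{L}_{\delta,\alpha}u(\x_0)>0$. Consequently, no point of $\Omega_s$ can be a maximum point of $u$, i.e. a maximum of $u$ is attained only in the interaction domain $\Omega_c$.

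I do not expect a genuine obstacle here. Because the inequality $\mathcal{L}_{\delta,\alpha}u>0$ is \emph{strict}, the argument is purely pointwise and requires no propagation/iteration across overlapping influence regions — that refinement would be needed only for the borderline case $\mathcal{L}_{\delta,\alpha}u\ge 0$ or for a strong maximum principle asserting that a maximizing $u$ is constant. The only points deserving a line of care are the containment $B_{\delta,{\bf A},\alpha}(\x_0)\subseteq\Omega$ and the strict positivity of $\gamma_\alpha$ inside its support, both immediate from the construction in Section~3, together with the implicit understanding that the maximum is actually attained, which is what the pointwise statement presumes.
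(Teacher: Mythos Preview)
Your proposal is correct and follows essentially the same approach as the paper's proof: both argue by contradiction, assuming a maximum is attained at $\x_0\in\Omega_s$, and then observe that the integrand $(u(\y)-u(\x_0))\gamma_\alpha(\x_0,\y)$ is nonpositive since $u(\y)-u(\x_0)\le 0$ and the kernel is nonnegative, forcing $\mathcal{L}_{\delta,\alpha}u(\x_0)\le 0$ in contradiction with the hypothesis. Your version is slightly more detailed in justifying the containment $B_{\delta,{\bf A},\alpha}(\x_0)\subseteq\Omega$ and the role of the strict inequality, but the argument is the same.
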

\begin{proof}
Assume that $u$ attains a  maximum at $\x_0\in\Omega_s$, then we have
\begin{equation}
\mathcal{L}_{\delta,\alpha} u(\x_0)=\int_{\Omega} (u(\y)-u(\x_0)){\gamma_{\alpha}}(\x,\y) \,d \y.
\end{equation}
Since $u(\y)-u(\x_0)\leq0$, it is easy to verify that
\begin{equation}
\int_{\Omega}\overbrace{(u(\y)-u(\x_0))}^{\leq 0}\overbrace{
\frac{2}{{\delta^{2} \sqrt{{(2\pi)}^{n} \left|\delta^2{\bf A(\x_0)}\right|}}} \text{exp}\left(-\frac{1}{2} (\y-\x_0)^{T} ({\delta^2\bf A(\x_0)})^{-1} (\y-\x_0)\right)
}^{\geq0}\leq 0,
\end{equation}
which is a contradiction with the assumption of $\mathcal{L}_{\delta,\alpha} u(\x_0)>0$.
\end{proof}

Let us consider the case of  the coefficient matrix $\mathbf{A}(\x)$ being constant. In this case $\gamma_{\alpha}(\x,\y)=\gamma_{\alpha}(\y,\x)$, and we can further show that  the proposed nonlocal diffusion model \eqref{dbc}  satisfies global mass conservation. Integrating both sides of \eqref{dbc} on ${\Omega_s}$,  we have
\begin{equation}
\begin{aligned}
-\int_{\Omega_s} \mathcal{L}_{\delta,\alpha} u(\x) d \x=&\int_{\Omega_s} \int_{\Omega_s\cup\Omega_c}(u(\x)-u(\y)){\gamma_{\alpha}}(\x,\y)\,\,d \y d \x\\
=&\int_{\Omega_s} \int_{\Omega_s}(u(\x)-u(\y)){\gamma_{\alpha}}(\x,\y)\,d \y d \x
+\int_{\Omega_s} \int_{\Omega_c}(u(\x)-u(\y)){\gamma_{\alpha}}(\x,\y)\,d \y d \x.
\end{aligned}
\end{equation}
Interchanging the variable $\x$ and $\y$, we get
\begin{eqnarray*}
&&\int_{\Omega_s} \int_{\Omega_s}u(\x){\gamma_{\alpha}}(\x,\y)-u(\y){\gamma_{\alpha}}(\x,\y)\,\,d \y d \x  \nonumber\\
&&\qquad=\dfrac12\Big(\int_{\Omega_s} \int_{\Omega_s}(u(\x)-u(\y)){\gamma_{\alpha}}(\x,\y)+(u(\y)-u(\x)){\gamma_{\alpha}}(\y,\x)\,d \y d \x\Big)=0,
\end{eqnarray*}
which means the flux from $\Omega_c$ into itself is zero. Thus we can get
\begin{equation}
-\int_{\Omega_s} \mathcal{L}_{\delta,\alpha} u(\x) d \x	=\int_{\Omega_s} \int_{\Omega_c}(u(\x)-u(\y)){\gamma_{\alpha}}(\x,\y)\,\,d \y d \x,
\end{equation}
and   the {\em mass conservation} of the proposed nonlocal model is then obtained as
\begin{equation}
\int_{\Omega_s} \int_{\Omega_c}(u(\x)-u(\y)){\gamma_{\alpha}}(\x,\y)\,\,d \y d \x=\int_{\Omega_s} f(\x)\,d\x .
\end{equation}
Here, the left-hand side term is the nonlocal flux out of $\Omega$ into $\Omega_s$, which is a proxy for the interaction between $\Omega_c$ and $\Omega_s$, while
the right-hand side term is the source from $\Omega_c$, and putting together, they cancel out each other. Note that such mass conservation doesn't hold anymore if $\mathbf{A}(\x)$
isn't a constant matrix.

\section{Numerical discretization}

In this section we propose and analyze an efficient linear collocation scheme for discretizing the nonlocal diffusion problem \eqref{dbc}. We refer to \cite{DDGGTZ20}
for a comprehensive review of existing numerical methods for nonlocal models.

\subsection{A linear collocation discretization scheme}
Without loss of generality, assume a rectangular grid $\mathcal{T}_h$ of  the domain $\Omega$ is given. Note that the discretization scheme developed below 
also can be similarly generalized to triangular/tetrahedral meshes with linear interpolation.  Let us denote the  nodes  of $\mathcal{T}_h$ as $\{\x_1,\x_2,\cdots, \x_{N_s}\}\in\Omega_s$ (interior nodes) and $\{\x_{N_s+1},\x_{N_s+2},\cdots, \x_{N_s+N_c}\}\in\Omega_c$ (nonlocal
boundary nodes). Denote by $\phi_i(\x)$  the standard continuous piecewise bilinear (if $d=2$) or trilinear (if $d=3$) basis function  at the point $\x_i$ and by $S_i$ the support region for each $\phi_i(\x)$. 
At each interior node $\x_i$ for $i=1,2,\cdots,N_s$,  we have for the equation \eqref{dbc} that
\begin{equation} \label{cran_k}
-\mathcal{L}_{\delta,\alpha}u(\x_i)=\int_{B_{\delta,{\bf A},\alpha}(\x_i)}(u(\x_i)-u(\y))\gamma_{\alpha}(\x_i,\y)\,d\y,
\end{equation}
then we approximate it by
\begin{equation} \label{shrub}
-\mathcal{L}^h_{\delta,\alpha} u(\x_i):=\int_{B_{\delta,{\bf A},\alpha}(\x_i)} \mathcal{I}_h\left((u(\x_i)-u(\y))\gamma_{\alpha}(\x_i,\y)\right)\,d\y,
\end{equation}
where 
$\mathcal{I}_h(\cdot)$ represents the piecewise bilinear (or trilinear) interpolation operator associated with the grid ${\mathcal T}_h$.
Note  \eqref{shrub} can be further rewritten  as 
\begin{eqnarray}\label{detailed_Interpolate1}
-\mathcal{L}^h_{\delta,\alpha} u(\x_i) &=& \sum_{{\x_j\in\Omega}\,\&\,j\ne i}(u(\x_i)-u(\x_j)) \gamma_{\alpha}(\x_i,\x_j)\int_{B_{\delta,{\bf A},\alpha}(\x_i)}\phi_j(\y) \,d\y. 
\end{eqnarray} 

Finally, a linear collocation scheme for solving the nonlocal diffusion problem \eqref{dbc} can be given as follows: find $(u_h(\x_1),\cdots,u_h(\x_{N_s}))$ such that
\begin{eqnarray}\label{detailed_Interpolate_linear}
-\mathcal{L}^h_{\delta,\alpha} u_h(\x_i) = f(\x_i), \qquad i=1,\cdots,N_s,
\end{eqnarray}

Let us define 
\begin{equation}\label{stiff_matrix_coef}
b_{i,j}=
\begin{cases}
- \gamma_{\alpha}(\x_i,\x_j)\displaystyle\int_{B_{\delta,{\bf A},\alpha}(\x_i)}\phi_j(\y) \,d\y,& \mbox{if } j\neq i,\\
-{\sum_{{\x_j\in\Omega}\,\&\,j\ne i}}\,b_{i,j},  & \mbox{if } j= i,
\end{cases}
\end{equation}
for $1\leq i\leq N_s$ and $1\leq j\leq N_s+N_c$. 
Then the resulting linear system from the  collocation discretization scheme \eqref{detailed_Interpolate_linear} can be expressed by
\begin{equation}\label{linearsys}
\unl{B}_h\vec{u}_h= \vec{f},
\end{equation}
where $\vec{u}_h=(u_h(\x_1),\cdots,u_h(\x_{N_s}))^T$, $\vec{f}=(f_1,\cdots,f_{N_s})^T$
with $\unl{B}_h=(b_{i,j})_{N_s\times N_s}$ and 
\begin{equation}
f_i = f(\x_i)+\sum_{j=N_s+1}^{N_s+N_c} {g(\x_j)}b_{i,j}.
\end{equation}

\begin{thm}\label{stiff_matrix}  
The stiffness matrix ${B}_h$ defined in \eqref{stiff_matrix_coef} is a nonsingular  \unl{M}-matrix. Consequently, the linear system \eqref{linearsys} produced from the  collocation  discretization \eqref{detailed_Interpolate_linear} for the nonlocal diffusion problem \eqref{dbc}  is uniquely solvable, and $u_h$ always satisfies the following discrete maximum principle when $g=0$:  if $f\leq 0$ in $\Omega_s$, then $\max_{1\leq i\leq N_s}u_h(\x_i)\leq 0$, and if  $f\geq 0$ in $\Omega_s$, then $\min_{1\leq i\leq N_s}u_h(\x_i)\geq 0$.
\end{thm}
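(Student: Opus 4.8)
The plan is to verify directly that $B_h = (b_{i,j})_{N_s \times N_s}$ satisfies the three defining properties of a nonsingular $M$-matrix: (i) the off-diagonal entries are nonpositive, (ii) the diagonal entries are positive, and (iii) $B_h$ is strictly (or irreducibly) diagonally dominant, which then guarantees nonsingularity. For (i), observe that for $j \neq i$ we have $b_{i,j} = -\gamma_{\alpha}(\x_i,\x_j)\int_{B_{\delta,{\bf A},\alpha}(\x_i)}\phi_j(\y)\,d\y$; since $\gamma_{\alpha} \geq 0$ by \eqref{newker} (it is a truncation of the positive Gaussian kernel) and the bilinear/trilinear basis functions satisfy $\phi_j \geq 0$, the integral is nonnegative, so $b_{i,j} \leq 0$. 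For the diagonal, the definition sets $b_{i,i} = -\sum_{\x_j \in \Omega,\, j \neq i} b_{i,j} = \sum_{\x_j \in \Omega,\, j \neq i} \gamma_{\alpha}(\x_i,\x_j)\int_{B_{\delta,{\bf A},\alpha}(\x_i)}\phi_j(\y)\,d\y \geq 0$, and I would argue it is strictly positive because at least one neighbor $\x_j$ of $\x_i$ lies in $B_{\delta,{\bf A},\alpha}(\x_i)$ with $\gamma_{\alpha}(\x_i,\x_j) > 0$ and $\int \phi_j > 0$ (the truncated region has positive volume and contains grid points other than $\x_i$).

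The key point for diagonal dominance is to compare the diagonal entry against the sum of off-diagonal moduli \emph{within the $N_s \times N_s$ block only}. Note that the summation index in the definition of $b_{i,i}$ runs over all $\x_j \in \Omega$, i.e.\ over both interior nodes ($j \leq N_s$) and interaction-domain nodes ($N_s < j \leq N_s + N_c$), whereas the matrix $B_h$ only retains the columns $j \leq N_s$. Therefore
\[
b_{i,i} - \sum_{\substack{1 \leq j \leq N_s \\ j \neq i}} |b_{i,j}| \;=\; \sum_{\substack{N_s < j \leq N_s+N_c}} \gamma_{\alpha}(\x_i,\x_j)\int_{B_{\delta,{\bf A},\alpha}(\x_i)}\phi_j(\y)\,d\y \;\geq\; 0,
\]
so $B_h$ is (weakly) diagonally dominant with every row sum nonnegative. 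To upgrade this to nonsingularity I would invoke the standard argument for irreducibly diagonally dominant matrices: the grid is connected through the nonlocal interactions, so the directed graph of $B_h$ is irreducible (each interior node interacts with enough neighbors, and the interaction web reaches the boundary), and moreover for any interior node $\x_i$ whose truncated influence region $B_{\delta,{\bf A},\alpha}(\x_i)$ reaches into $\Omega_c$ the row sum is \emph{strictly} positive by the third inequality in \eqref{kercondition1} (the constant $K_2(\delta) > 0$). An irreducibly diagonally dominant matrix with at least one strictly dominant row is nonsingular, and combined with nonpositive off-diagonals and positive diagonals it is a nonsingular $M$-matrix; hence \eqref{linearsys} is uniquely solvable.

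For the discrete maximum principle, I would use the characterization that a nonsingular $M$-matrix has $B_h^{-1} \geq 0$ entrywise. When $g = 0$, the load vector reduces to $f_i = f(\x_i)$, so $\vec{u}_h = B_h^{-1}\vec{f}$; if $f \leq 0$ on $\Omega_s$ then $\vec{f} \leq 0$, hence $\vec{u}_h = B_h^{-1}\vec{f} \leq 0$, giving $\max_i u_h(\x_i) \leq 0$, and symmetrically $f \geq 0$ forces $\vec{u}_h \geq 0$. Alternatively one can give the direct row-by-row argument: at the index $i$ where $u_h$ attains its maximum, $b_{i,i}u_h(\x_i) = f_i - \sum_{j\neq i} b_{i,j}u_h(\x_j) \geq f_i - \sum_{j\neq i}b_{i,j}u_h(\x_i) = f_i + (b_{i,i} - \sum_{\x_j\in\Omega, j\neq i}b_{i,j})u_h(\x_i)\cdot(\text{contribution of boundary terms})$, which mirrors the continuous proof of Lemma~\ref{Maximum Principle}.

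I expect the main obstacle to be the irreducibility/connectivity bookkeeping needed to pass from weak to strict diagonal dominance: one must confirm that every connected block of interior nodes "sees" the interaction domain $\Omega_c$ through the chain of nonzero couplings $b_{i,j}$, so that at least one row in each block is strictly dominant and the matrix is irreducibly diagonally dominant. This is where the geometric assumption that $\Omega_c$ is the full interaction collar of $\Omega_s$ (so $B_{\delta,{\bf A},\alpha}(\x)$ actually protrudes into $\Omega_c$ for boundary-adjacent interior nodes) and the positivity $K_2(\delta) > 0$ from \eqref{kercondition1} do the real work; the $M$-matrix and maximum-principle consequences are then routine.
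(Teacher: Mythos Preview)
Your proposal is correct and follows essentially the same route as the paper: verify the sign pattern $b_{i,j}\le 0$ for $j\ne i$, $b_{i,i}>0$, observe that the row sums $\sum_{j\le N_s}b_{i,j}$ pick up strictly positive surplus whenever $B_{\delta,{\bf A},\alpha}(\x_i)$ reaches into $\Omega_c$, conclude $B_h$ is a nonsingular $M$-matrix, and read off the discrete maximum principle from $B_h^{-1}\ge 0$. Two minor remarks: (a) you are actually more careful than the paper, which skips the irreducibility/connectivity step you flag as the main obstacle and simply asserts the $M$-matrix property once strict dominance holds in the boundary-adjacent rows; (b) your appeal to $K_2(\delta)$ from \eqref{kercondition1} is slightly misplaced, since that is a bound on a continuous integral rather than on the discrete row sum---the cleaner justification, used in the paper, is purely geometric: if $B_{\delta,{\bf A},\alpha}(\x_i)\cap\Omega_c\ne\emptyset$ then some boundary node $\x_j\in\Omega_c$ has $S_j$ meeting $B_{\delta,{\bf A},\alpha}(\x_i)$, so $\gamma_\alpha(\x_i,\x_j)>0$ and $\int\phi_j>0$.
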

\begin{proof}
It is obvious that  for $1\leq i\leq N_s$ and $1\leq j\leq N_s+N_c$, we have $\gamma_{\alpha}(\x_i,\x_j)>0$ and $\int_{B_{\delta,{\bf A},\alpha}(\x_i)}\phi_j(\y) \,d\y>0$ if $i\ne j$ and  $B_{\delta,{\bf A},\alpha}(\x_i)\cap S_j\ne \emptyset$. 
Thus by (\ref{stiff_matrix_coef}), it holds that for any $j\ne i$, we have 
$b_{i,j}< 0$ if $B_{\delta,{\bf A},\alpha}(\x_i)\cap S_j \ne \emptyset$ and $b_{i,j}= 0$ if $B_{\delta,{\bf A},\alpha}(\x_i)\cap S_j = \emptyset$ (since  $\phi_j(\y)= 0$  for $\y\notin S_j$).
Consequently we have $b_{i,i}> 0$ for $1\leq i\leq N_s$. Furthermore, it is easy to see that 
$\sum_{j=1}^{N_s} b_{i,j}>0$  if $B_{\delta,{\bf A},\alpha}(\x_i)\cap\Omega_c\ne\emptyset$. 
Thus,  the stiffness matrix  ${B}_h$ is an $\unl{M}$-matrix, which means that ${B}_h^{-1}$ exists and does not have  negative entries. The unique solvability of the linear system \eqref{linearsys} and the discrete maximum principle then directly follows. 
\end{proof}

Theorem \ref{stiff_matrix}  guarantees  the numerical stability of the  proposed linear collocation discretization scheme \eqref{detailed_Interpolate_linear}.  
In the proposed model, the influence region $B_{\delta,{\bf A},\alpha}(\x_i)$ of each  node $\x_i$, is a rotated ellipse. The intersection between $B_{\delta,{\bf A},\alpha}(\x_i)$ and the support region $S_j$ of another node $\x_j$ may be irregular, especially when $\x_j$ is near the edge of $B_{\delta,{\bf A},\alpha}(\x_i)$. This irregular intersection could introduce quadrature errors and potentially impact the accuracy of the simulation. To address this issue, we expand the integral region from the irregular intersection $B_{\delta,{\bf A},\alpha}(\x_i) \cap S_j$ to the regular support domain $S_j$. This expansion allows for easy integration by aligning the integration area precisely with the grid.

\subsection{Discussion on convergence and asymptotic compatibility}
Let us  assume that the parameter $\alpha$ is small enough so that {\em the difference between  the original operator $\mathcal{L}_{\delta} $ \eqref{dbc-org}  and the truncated operator $\mathcal{L}_{\delta,\alpha}$ \eqref{expnon-mod} is negligible}. We first consider a rectangular grid $\mathcal{T}_h$ of $\Omega$ which is uniform along each space direction with respective grid sizes. If we extend the grid to whole space $\mathbb{R}^d$, then 
$$-\mathcal{L}^h_{\delta} u(\x_i) :=\int_{\mathbb{R}^d} \mathcal{I}_h\left((u(\x_i)-u(\y))\gamma(\x_i,\y)\right)\,d\y$$
in fact gives the  trapezoidal rule for evaluating the original nonlocal diffusion operator \eqref{expnon} (no truncation of the influence region)
\begin{equation*}
\begin{aligned}
-\mathcal{L}_{\delta} u(\x_i) &= \int_{\mathbb{R}^d} (u(\x_i)-u(\y))\gamma(\x_i,\y)\,d\y\\
&=\frac{2}{\delta^{2}}\int_{\mathbb{R}^d} (u(\x_i)-u(\y))\frac{1}{ \sqrt{{(2\pi)}^{d} \left|\delta^2\boldsymbol{\mathbf{A}(\x_i)}\right|}} \text{\rm exp}\left(-\frac{(\y-\x_i)^{T} \mathbf{A}(\x_i)^{-1} (\y-\x_i)}{2\delta^2}\right)\,d\y\\
&=\frac{2}{\delta^{2}\pi^{d/2}}\int_{\mathbb{R}^d} (u(\x_i)-u(\x_i+\sqrt2\delta \mathbf{A}^{1/2}\y)\,\text{\rm exp}\left(-\y^T\y\right)\,d\y.
\end{aligned}
\end{equation*}
Based on the analysis results of \cite{TrWe2014,EgLu1989},  a remarkable conclusion is that for any fixed horizon parameter $\delta>0$, such approximation is exponentially convergent with respect to the grid size if $u$ is  {\em analytic} in $\mathbb{R}^d$. Therefore, we expect the numerical solution produced by  the  linear collocation scheme \eqref{detailed_Interpolate_linear} to converge exponentially to the solution of the nonlocal diffusion model \eqref{dbc}  at the set of all grid points $\{\x_i\}$  since $|\mathcal{L}_{\delta}u(\x_i)- \mathcal{L}_{\delta,\alpha}u(\x_i)| $  and $|\mathcal{L}^h_{\delta}u(\x_i)- \mathcal{L}^h_{\delta,\alpha}u(\x_i)| $ are negligible. On the other hand, if the rectangular grid is not uniform, then the convergence order  will downgrade to the regular second-order for linear schemes since the exponential convergence doesn't exist anymore. These results will be numerically demonstrated  by experiments in  Section 6.1.

Ensuring asymptotic compatibility (i.e., the approximate solution of the nonlocal model problem \eqref{dbc} converges to the exact solution of the corresponding local PDE problem \eqref{acold} when the horizon parameter $\delta\rightarrow 0$ and the grid size $h\rightarrow 0$ simultaneously in arbitrary fashion)  is of fundamental importance in guaranteeing the accuracy and reliability of numerical schemes for nonlocal models.  Let us consider the simple one-dimensional problem ($d=1$) where $\mathbf{A}=1$ and a uniform partition with grid size $h$ is used. It holds 
by \cite{EgLu1989} and  the equation \eqref{modelerr} that if $u$ is analytic, then
\begin{equation}
\begin{aligned}
|\mathcal{L}^h_{\delta} u(x_i)-\mathcal{L}u(x_i)|&\leq |\mathcal{L}^h_{\delta} u(x_i)-\mathcal{L}_{\delta} u(x_i)|+|\mathcal{L}_{\delta} u(x_i)-\mathcal{L} u(x_i)|\\
&= O\Big(\Big(\frac{e^{-\pi/h}}{\delta}\Big)^2|u(x_i)-u(x_i+{\rm i}\sqrt2\delta\pi h)|\Big)+O(\delta^2)\rightarrow 0,
\end{aligned}
\end{equation}
as $\delta\rightarrow 0$ and $ e^{-\pi/h}/\delta\rightarrow 0$.  Thus we would like to say that our scheme \eqref{detailed_Interpolate_linear} is {\em effectively} asymptotically compatible since it additionally requires $ e^{-\pi/h}/\delta\rightarrow 0$.  Note  this condition can be easily satisfied in practice. 
In particular, when the ratio between $\delta$ and $h$  keeps fixed (i.e., $\delta/h=\kappa$ being a constant) along the grid refinement,
it is called the $\delta$-convergence  test.   This behavior is a manifestation of the well-known continuum limit in which the underlying physical system becomes increasingly smooth and continuous. For such case we have
$$|\mathcal{L}^h_{\delta} u(x_i)-\mathcal{L}u(x_i)| = O\Big(\Big(\frac{e^{-\pi/h}}{\kappa h}\Big)^2|u(x_i)-u(x_i+{\rm i}\sqrt2\kappa\pi )|\Big)+O(\kappa h^2) = O(h^2),$$
thus the numerical solution obtained for the  nonlocal diffusion model \eqref{dbc} will converge quadratically  to the solution of the corresponding local problem \eqref{dbc-pde} as $h\rightarrow 0$. It is expected such effective asymptotical compatibility will also show up in general dimensions and we will demonstrate it through extensive numerical  experiments in Section 6.2. 

\begin{rem}\label{quconv}
		For traditional bond-based nonlocal diffusion model, in order to achieve  asymptotic compatibility, a popular quadrature-based finite difference discretization for multidimensional problems  was developed in \cite{DuTao2019,DuJuTi2017}: for $i=1,\cdots,N_s$,
		\begin{equation} \label{acold}
			-\widetilde{\mathcal{L}}^h_{\delta,\alpha} u(\x_i):=\int_{B_{\delta,{\bf A},\alpha}(\x_i)} \mathcal{I}_h\left(\frac{u(\x_i)-u(\y)}{W(\x_i,\y)}\right)W(\x_i,\y)\gamma_{\alpha}(\x_i,\y)\,d\y= f(\x_i), 
		\end{equation}
		where the weight function $W(\x,\y)$ is usually given by
		\begin{equation*}
			W(\x,\y)=\frac{\left\|\y-\x\right\|^2}{\left\|\y-\x\right\|_1}
		\end{equation*}
		with the notation $\left\| \cdot \right\|_1$ standing for the $L^1$ norm. However, the numerical scheme  \eqref{acold} could not be asymptotically compatible for our 
		bond-based nonlocal diffusion model \eqref{dbc} and the original model \eqref{dbc-org} developed  in this paper. One of the main reasons is that the proof of asymptotic compatibility of  the scheme \eqref{acold} heavily depends
		on the radial symmetry (for isotropic diffusion) of the influence  region of the kernel function $\gamma_{\alpha}$, which doesn't hold anymore for the case of anisotropic diffusion for the proposed nonlocal model. Numerical tests presented in Section 6.2 will demonstrate this issue.
How to appropriately modify the scheme \eqref{acold} to make it to be 
asymptotically compatible remains an interesting open problem.
\end{rem}

\section{Numerical experiments}

In this section, we will conduct various numerical experiments in two and three dimensional space to showcase application of the proposed bond-based nonlocal diffusion model \eqref{dbc}, as  an approximation to the original nonlocal diffusion model  \eqref{dbc-org}, to both isotropic and anisotropic diffusion problems, and numerically demonstrate the convergence and effective asymptotic compatibility of the proposed linear collocation scheme \eqref{detailed_Interpolate_linear}. In addition, we also test the discrete maximum principle and the effect of the choice of the truncation parameter 
$\chi^2_\alpha(d)$ on  the approximation accuracy of $\mathcal{L}_{\delta,\alpha}$ to $\mathcal{L}_{\delta}$. Note that in Examples 1-5 we always take  
$\chi^2_\alpha(d)=36$ by default based on the comparison result observed from Example 6 in Subsection \ref{chicomp}. 
The choices of $\mathbf{A}$ in all examples satisfy the condition \eqref{condp}.
{The solution errors  are measured under the {\em discrete maximum norm}, which is defined to be the maximum absolute  value among all grid points $\{\x_i\}$.

\subsection{Tests with fixed horizon parameter}

We first keep the $\delta$ fixed and test the convergence of the linear collocation scheme \eqref{dbc} for solving the nonlocal diffusion model \eqref{dbc}.

\begin{exmp}\label{exp2}
Let us take the 2D domain $\Omega_s = [0, 1] \times [0, 1]$, $\mathbf{A}=[1,0;0,1]$,  and {$\delta=1/40$} for the nonlocal diffusion problem \eqref{dbc}. 
 The exact solution is chosen  to be $u(x_1,x_2)=x_1x_2^5$, $u(x_1,x_2)=e^{x_1 x_2}$, and $u(x_1,x_2)=\sin(x_1^2+x_2^2)$, respectively. Then
 the boundary value $g(x_1, x_2)$ are directly obtained from the exact solution $u(x_1,x_2)$ and the source term  $f(x_1,x_2)$ are determined accordingly based on the  original nonlocal diffusion model \eqref{dbc-org}.
\end{exmp}
	In the following, we perform numerical experiments with three types of rectangular grids.
 Firstly, we uniformly partition the domain to $N\times N$ rectanglur cells , where $N = 20, 25, 30, 35, 40, 50,$ respectively.  The grid spacing are clearly  the same for both $x$- and $y$- directions, 
 $h_x=h_y=h=1/N$. Table \ref{tab2-1} reports the discrete {$L^{\infty}$} solution errors  produced by the  linear collocation scheme \eqref{detailed_Interpolate_linear}.  We clearly observe the exponential  convergence  in all three exact solution cases as discussed in Section 5. Furthermore, it is also seen that after $N=40$ is reached, the errors don't change much anymore or even slightly increase. This is caused by the truncation of the influence region with the parameter $\chi^2_\alpha(2)=36$ to the original nonlocal model \eqref{dbc-org}. The model errors gradually dominate the solution errors along with the mesh refinement.

	\begin{table} [!ht]\small
		\center
		\renewcommand{\arraystretch}{1.2}
		\begin{tabular}{|c||c|c|c|c|c|c|}
		\hline
		& \multicolumn{2}{|c|}{$u(x_1,x_2)=x_1 x_2^5$} &  \multicolumn{2}{|c|}{$u(x_1,x_2)=e^{x_1 x_2}$} &  \multicolumn{2}{|c|}{$u(x_1,x_2)=\sin(x_1^2+x_2^2)$}  \\
		\hline
		$N$ &$L^\infty$ error&CR&$L^\infty$ error & CR &$L^\infty$ error& CR \\ \hline
		\hline 20 &  ${3.1594\times10^{-2}}$ & -& ${1.0367\times10^{-2}}$ &-&  ${2.0841\times10^{-2}}$ &-\\
		\hline 25 &  ${2.4209\times10^{-3}}$ & 11.51& ${8.6355\times10^{-4}}$ &11.14&  ${1.9893\times10^{-3}}$ &10.53\\
		\hline 30& ${1.2152\times10^{-4}}$  &16.41&${4.3467\times10^{-5}}$ &16.39& ${1.0027\times10^{-4}}$ & 16.39\\
		\hline 35& ${ 3.0658\times10^{-6}}$  & 23.87&${9.5991\times10^{-7}}$ &24.74& ${2.4812\times10^{-6}}$ & 24.00\\
		\hline 40 & ${7.5238\times 10^{-8}}$ &27.76 & ${9.6315\times 10^{-8}}$ & 17.22& ${2.8200\times 10^{-7}}$  &16.29 \\
		\hline 50 & ${9.5161\times 10^{-8}}$ &-1.05& ${5.8425\times 10^{-8}}$ &2.24 & ${3.1212\times 10^{-7}}$  &-0.46 \\			
		\hline
	\end{tabular}
		\caption{Numerical results on the discrete $L^{\infty}$  solution errors and corresponding convergence rates  produced by the linear collocation scheme \eqref{detailed_Interpolate_linear} for the nonlocal diffusion model \eqref{dbc} with fixed {$\delta=1/40$} in Example \ref{exp2}. Uniform rectangular grids of $N\times N$ are used. }
		\label{tab2-1}
	\end{table}

	Secondly, we discretize the domain uniformly  into a grid with $N_x$ intervals in the $x$-direction and $N_y$ intervals in the $y$-direction, where $(N_x,Ny)= (40,20), (50,25), (60,30), (70,35), (80,40)$, respectively. Note that the grid spacings are now different  for $x$- and $y$-directions with $h_x=1/N_x$ and $h_y=1/N_y$. The discrete  $L^{\infty}$ solution errors, as reported in Table \ref{tab2-2}, still  show exponential convergence as expected. 
	
	\begin{table} [!ht]\small
	\center
	\renewcommand{\arraystretch}{1.2}
	\begin{tabular}{|c|c||c|r|c|r|c|r|}
		\hline
		&& \multicolumn{2}{|c|}{$u(x_1,x_2)=x_1 x_2^5$} &  \multicolumn{2}{|c|}{$u(x_1,x_2)=e^{x_1 x_2}$} &  \multicolumn{2}{|c|}{$u(x_1,x_2)=\sin(x_1^2+x_2^2)$}  \\
		\hline
		$N_x$ &$N_y$&$L^\infty$ error &CR &$L^\infty$ error &CR &$L^\infty$ error&CR \\ \hline
		\hline 40 &20& ${3.3687\times 10^{-2}}$ &-& ${4.7600\times 10^{-3}}$&- & ${9.8815\times 10^{-3}}$ &-  \\
		\hline 50 &25& $ {2.6168\times 10^{-3}}$&11.45 & ${4.6607\times 10^{-4}}$&10.41 & ${1.0111\times 10^{-3}}$ &10.21 \\
		\hline 60 & 30&$ {1.2902\times 10^{-4}}$ &16.51& ${2.3552\times 10^{-5}}$ &16.37& ${5.1047\times 10^{-5}}$&16.38  \\
		 \hline 70 & 35&$ {3.2314\times 10^{-6}}$&23.92& ${6.3230\times 10^{-7}}$ & 23.47& ${1.1627\times 10^{-6}}$ &24.53 \\
	    \hline 80 & 40&$ {1.7862\times 10^{-7}}$ & 21.68& ${6.7596\times 10^{-8}}$ &16.74& ${3.3266\times 10^{-7}}$&9.37  \\
		\hline
	\end{tabular}
	\caption{Numerical results on the  discrete  $L^{\infty}$  solution errors and corresponding convergence rates  produced by the linear collocation scheme \eqref{detailed_Interpolate_linear} for the nonlocal diffusion model \eqref{dbc} with fixed {$\delta=1/40$} in Example \ref{exp2}. Uniform rectangular grids of $N_x\times N_y$ are used. }
	\label{tab2-2}
\end{table}

	Finally, we divide the domain in the $x$-direction non-uniformly: $[0,0.5]$ is partitioned  to $N_{xl}$ subintervals and $[0.5,1)$ $N_{xr}$ subintervals. The $y$-direction of the domain is then partitioned uniformly into $N_{y}$ subintervals. Consequently, the obtained rectangular grids  are globally  non-uniform with $(N_{xl}+N_{xr})\times N_y$ rectangular cells. Table \ref{tab2-3} presents the  discrete  $L^{\infty}$ solution errors obtained by using the linear collocation scheme \eqref{detailed_Interpolate_linear}.  We now observe only second-order convergence  in all cases as discussed in Section 5.

	\begin{table} [!ht]\small
	\center
	\renewcommand{\arraystretch}{1.2}
	\begin{tabular}{|c|c|c||c|c|c|c|c|c|}
		\hline
		&&& \multicolumn{2}{|c|}{$u(x_1,x_2)=x_1 x_2^5$} &  \multicolumn{2}{|c|}{$u(x_1,x_2)=e^{x_1 x_2}$} &  \multicolumn{2}{|c|}{$u(x_1,x_2)=\sin(x_1^2+x_2^2)$}  \\
		\hline
		$N_{xl}$ &$N_{xr}$&$N_{y}$&$L^\infty$ error&CR&$L^\infty$ error &CR&$L^\infty$ error&CR \\ \hline
		\hline 10&15 &20&  ${3.0391\times10^{-2}}$ & - &${2.2799\times10^{-2}}$ & -&${1.4295\times10^{-2}}$ &-\\
		\hline 20&30&40& ${8.2144\times10^{-4}}$  & 5.21&${1.5704\times10^{-3}}$ &3.86&${1.1348\times10^{-3}}$&3.66 \\
		\hline 40&60&80& ${1.8793\times 10^{-4}}$  &2.13& ${3.7312\times 10^{-4}}$ &2.07&${2.7345\times 10^{-4}}$  &2.05  \\
		\hline 80&120 &160& $ {4.5641\times 10^{-5}}$ &2.04&${9.1977\times 10^{-5}}$ & 2.02&${6.7871\times 10^{-5}}$&2.01 \\
		\hline 160&240& 320&$ {1.1393\times 10^{-5}}$ & 2.00&${2.2804\times 10^{-5}}$ & 2.01&${1.6774\times 10^{-5}}$ &2.01 \\
		\hline
	\end{tabular}
	\caption{Numerical results on the  discrete  $L^{\infty}$  solution errors and corresponding convergence rates with fixed {$\delta=1/40$} produced by the linear collocation scheme \eqref{detailed_Interpolate_linear} for the nonlocal diffusion model \eqref{dbc} in Example \ref{exp2}. Non-uniform rectangular grids of $(N_{xl}+N_{xr})\times N_y$ are used.}
	\label{tab2-3}
\end{table}
\subsection{Tests for $\delta$-convergence}

We now investigate the effective asymptotic compatibility of the proposed  collation scheme \eqref{detailed_Interpolate_linear}  as $\delta$ and $h$ approach zero simultaneously. Specifically, we consider the so-called $\delta$-convergence  test  and set the ratio $\delta/h=1$.
We compute the error between the  numerical solution obtained for the nonlocal problem \eqref{dbc} and the exact solution of the corresponding classical local problem \eqref{dbc-pde}
and observe its behaviors as  $h$ tend to zero. 
By characterizing the $\delta$-convergence of the proposed collocation scheme, we can assess its accuracy and reliability, and gain insight into the underlying physics of the system under consideration.

\begin{exmp}\label{exp4}
 We consider the 2D domain $\Omega_s = [0, 1] \times [0, 1]$ and  take $u(x_1,x_2)=\sin(x_1^2+x_2^2)$ as the exact solution of the classic diffusion (PDE) problem \eqref{dbc-pde}.
 The following three different $2\times 2$ constant coefficient matrices are tested:
 \begin{equation*}
\mathbf{A}_1=	
\left(\begin{array}{cc}
1& 0\\[3pt]
0 & 1
\end{array}\right),\qquad\mathbf{A}_2=     \left(\begin{array}{cc}
10& 0 \\[3pt]
0 & 1
\end{array}\right),
\end{equation*}
and
\begin{equation*}
\mathbf{A}_3=	\left(\begin{array}{cc}
\cos \frac{\pi}{6}& \sin \frac{\pi}{6} \\[3pt]
-\sin \frac{\pi}{6} & \cos\frac{\pi}{6}
\end{array}\right)\left(\begin{array}{cc}
10 & 0 \\[3pt]
0 & 1
\end{array}\right)\left(\begin{array}{cc}
\cos \frac{\pi}{6} & -\sin\frac{\pi}{6} \\[3pt]
\sin \frac{\pi}{6} & \cos \frac{\pi}{6}
\end{array}\right).
\end{equation*}
 For the nonlocal diffusion model \eqref{dbc}, 
 the boundary value $g(x_1, x_2)$ are directly obtained from the exact solution $u(x_1,x_2)$ and the source term  $f(x_1,x_2)$ is determined accordingly based on the classic diffusion problem \eqref{dbc-pde}.
\end{exmp}

For $\mathbf{A}_1$, we have the influence region of the kernel function $\gamma_{\alpha}(\x,\y)$ associated with a point $\x\in\Omega_s$ as:  $$B_{\delta,{\bf A}_1,\alpha}(\x)=\left\{(y_1,y_2)\,\Big|\,\frac{(y_{1}-x_{1})^2}{36}+ \frac{(y_{2}-x_{2})^2}{36}\le \delta^2\right\},$$ which corresponds to the disk of radius $6\delta$ centered at $\x$. We then divide the domain $\Omega_s$ into  a uniform grid of $N\times N$ cells with the grid size $h=1/N$, where $N=40, 80, 160, 320$ respectively.  We  set $\delta=h$ to maintain a consistent ratio across all levels of  grids. Figure \ref{meat}-(a) illustrates the computational domain $\Omega=\Omega_s\cup\Omega_c$ and the influence regions for ${\bf A}_1$. The influence region for $\mathbf{A}_2$  is given by $$B_{\delta,{\bf A}_2,\alpha}(\x)=\left\{(y_1,y_2)\,\Big|\,\frac{(y_{1}-x_{1})^2}{360}+ \frac{(y_{2}-x_{2})^2}{36}\le \delta^2\right\},$$ which is the elliptic region centered at $\x$ with the major axis of length $12\sqrt{10}\delta$ and the minor axis of length $12\delta$. Figure \ref{meat}-(b)  illustrates the computational domain $\Omega$ and the influence regions for ${\bf A}_2$. The influence area for $\mathbf{A}_3$ is given by 
\begin{equation}
B_{\delta,{\bf A}_3,\alpha}(\x)=\left\{(y_1,y_2)\,\Big|\,\frac{13(y_{1}-x_{1})^2}{1440}+\frac{\sqrt{3}(y_{1}-x_{1})(y_{2}-x_{2})}{80} +\frac{31(y_{2}-x_{2})^2}{1440}\le \delta^2\right\}.
\end{equation}
The diagonalization of $\mathbf{A}_3$ shows that $B_{\delta,{\bf A}_3,\alpha}(\x)$ is obtained by rotating the elliptic region corresponding to $\mathbf{A}_2$ with an angle of $30^\circ$ counterclockwisely.
Figure \ref{meat}-(c)   shows  the computational domain $\Omega$ and the influence regions for ${\bf A}_3$.

Table \ref{monday} reports the  discrete {$L^{\infty}$} solution errors and corresponding convergence rates produced by the   linear collocation scheme \eqref{linearsys}. We observe the  second-order convergence along the refinement for all these  test cases of $\delta$-convergence as discussed in Remark \ref{quconv}.

\begin{figure}[!ht]
\centering
\subfigure[\label{fig:a1} The coefficient matrix ${\bf A}_1$]{
\includegraphics[scale=0.25]{./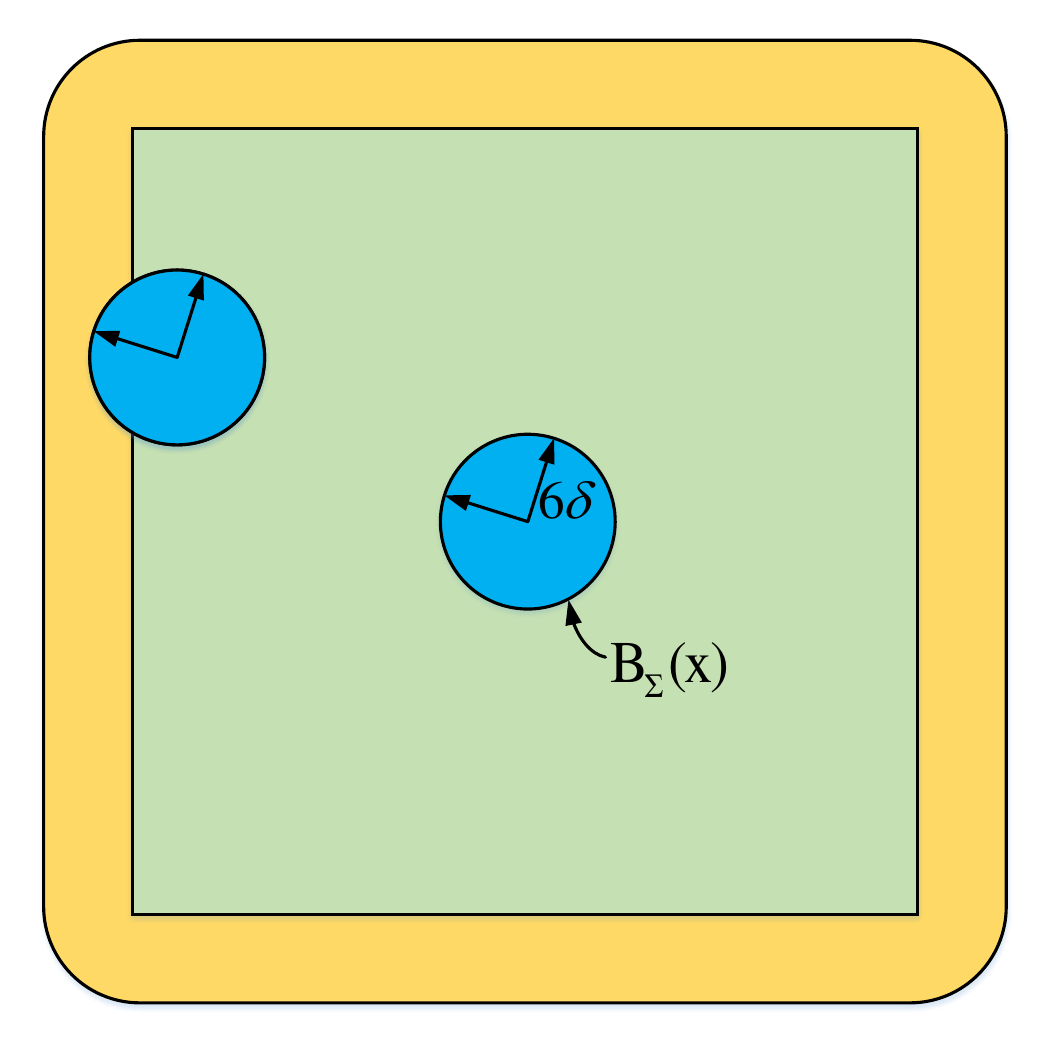}\includegraphics[scale=0.145]{./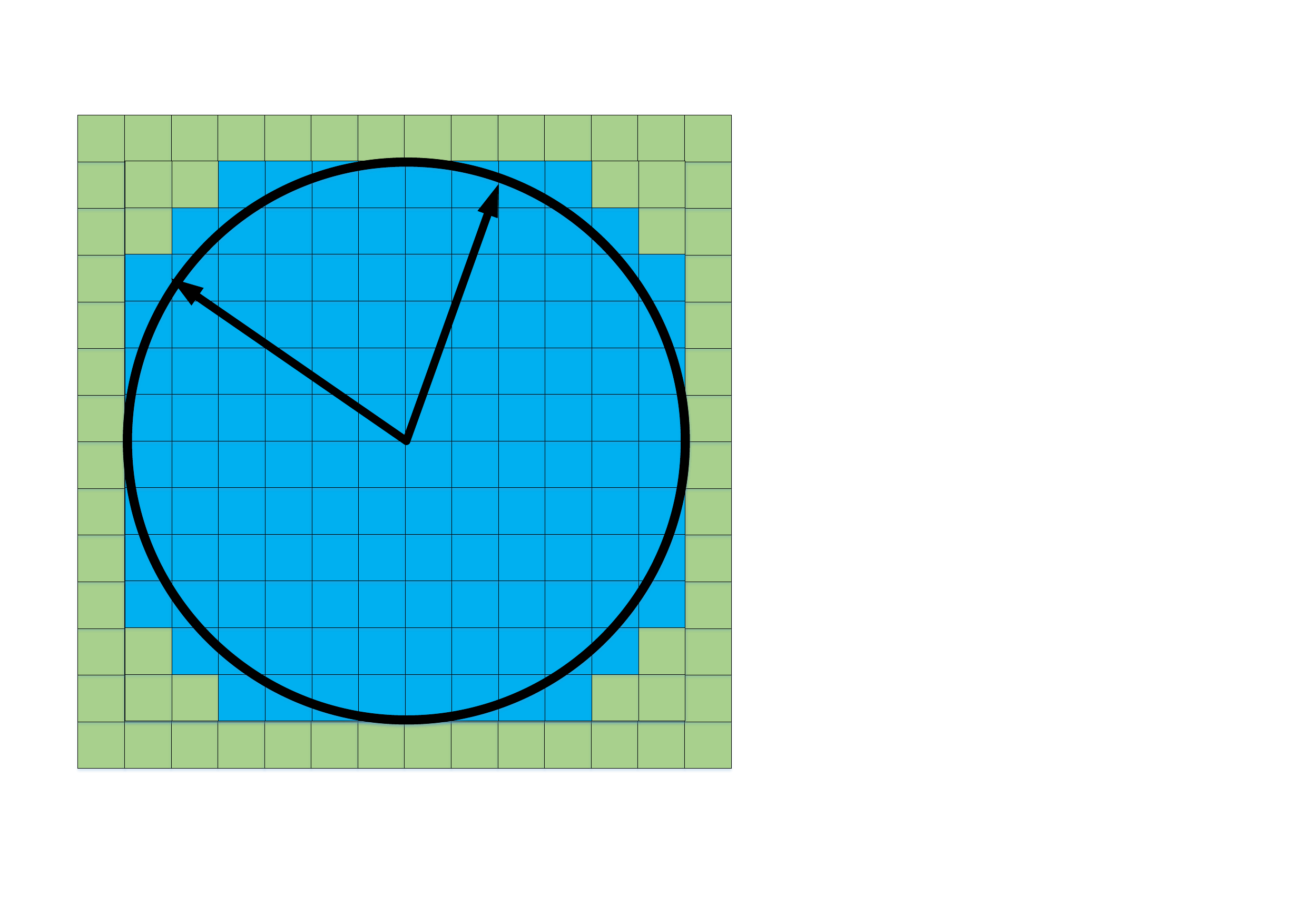}}
\subfigure[\label{fig:a2} The coefficient matrix  ${\bf A}_2$]{
\includegraphics[scale=0.25]{./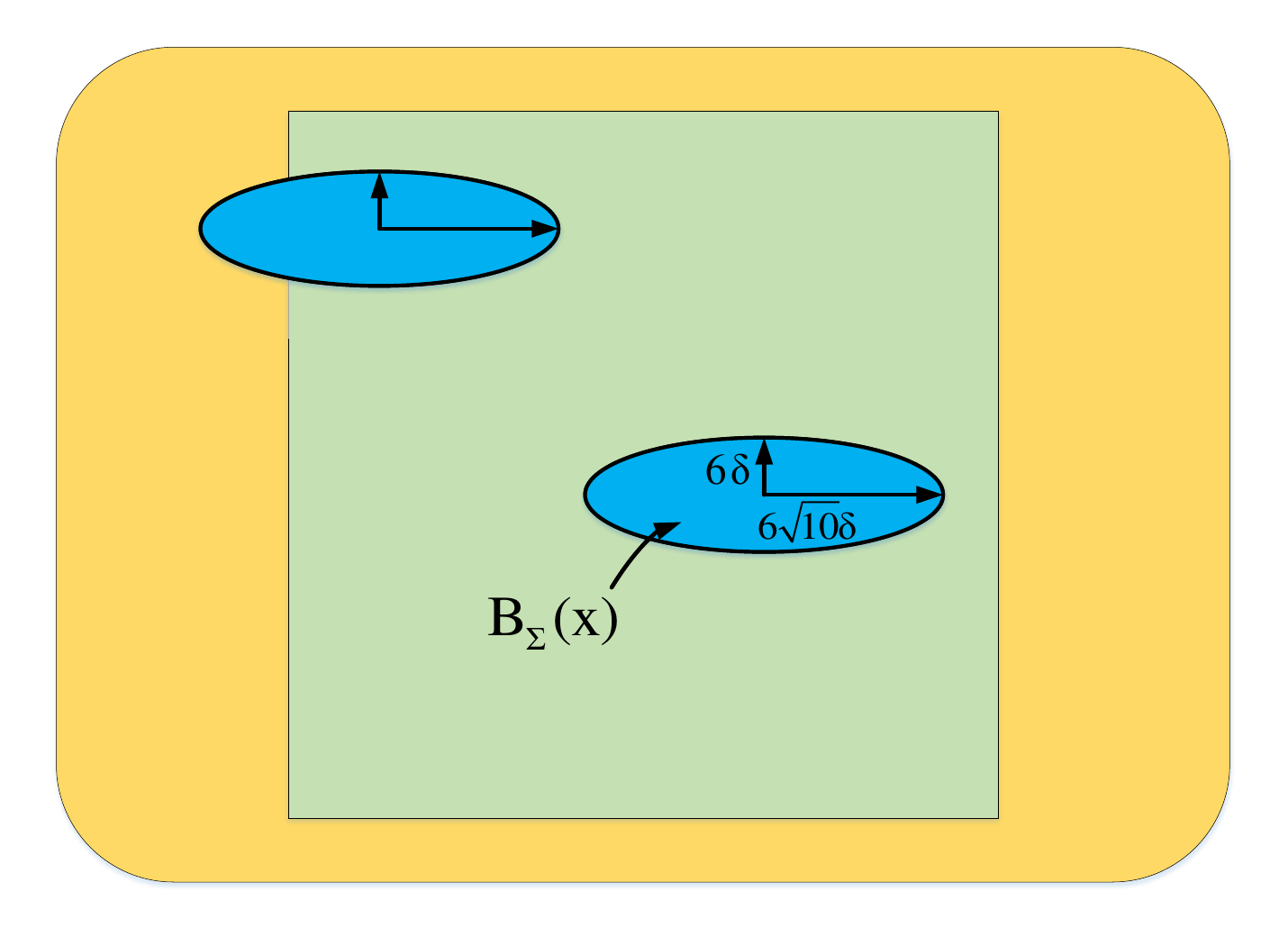}\includegraphics[scale=0.145]{./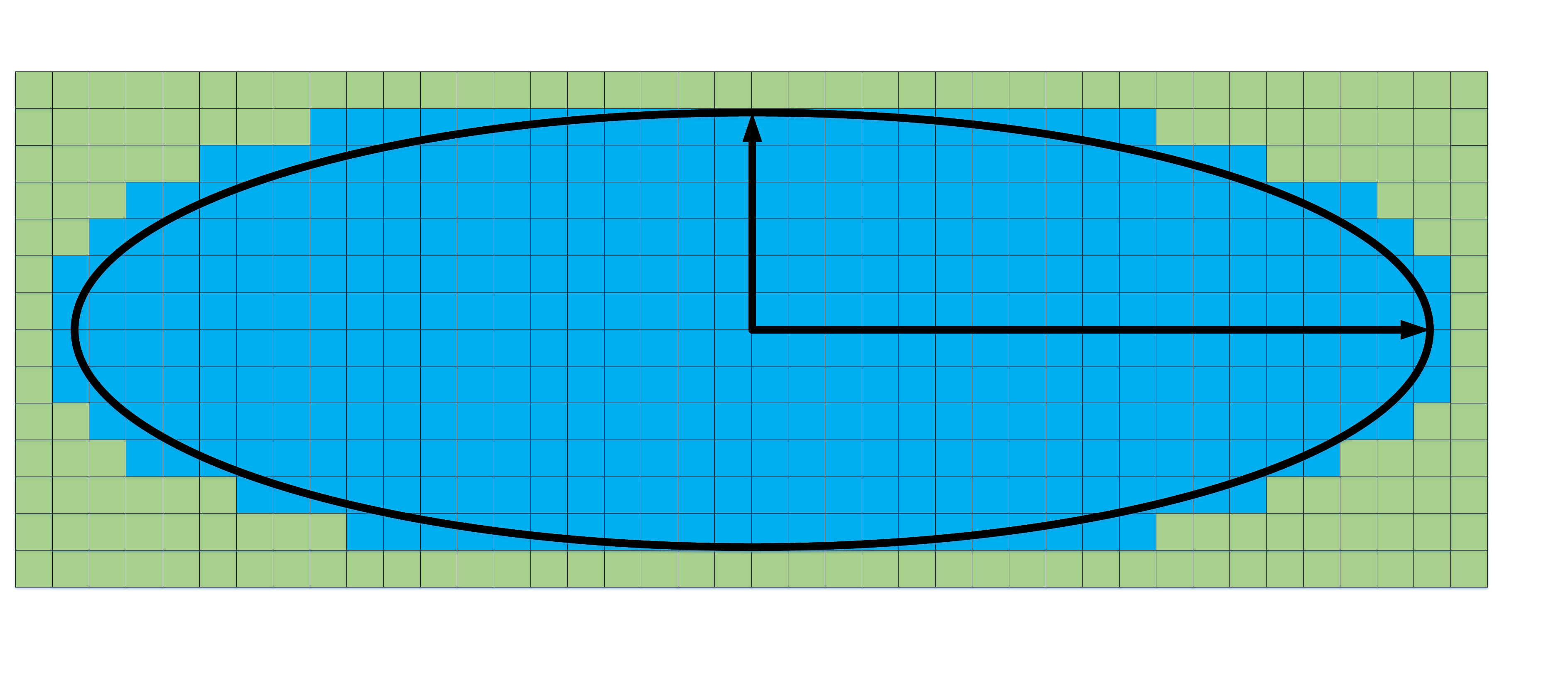}}
\subfigure[\label{fig:a3}The coefficient matrix  ${\bf A}_3$]{
\includegraphics[scale=0.25]{./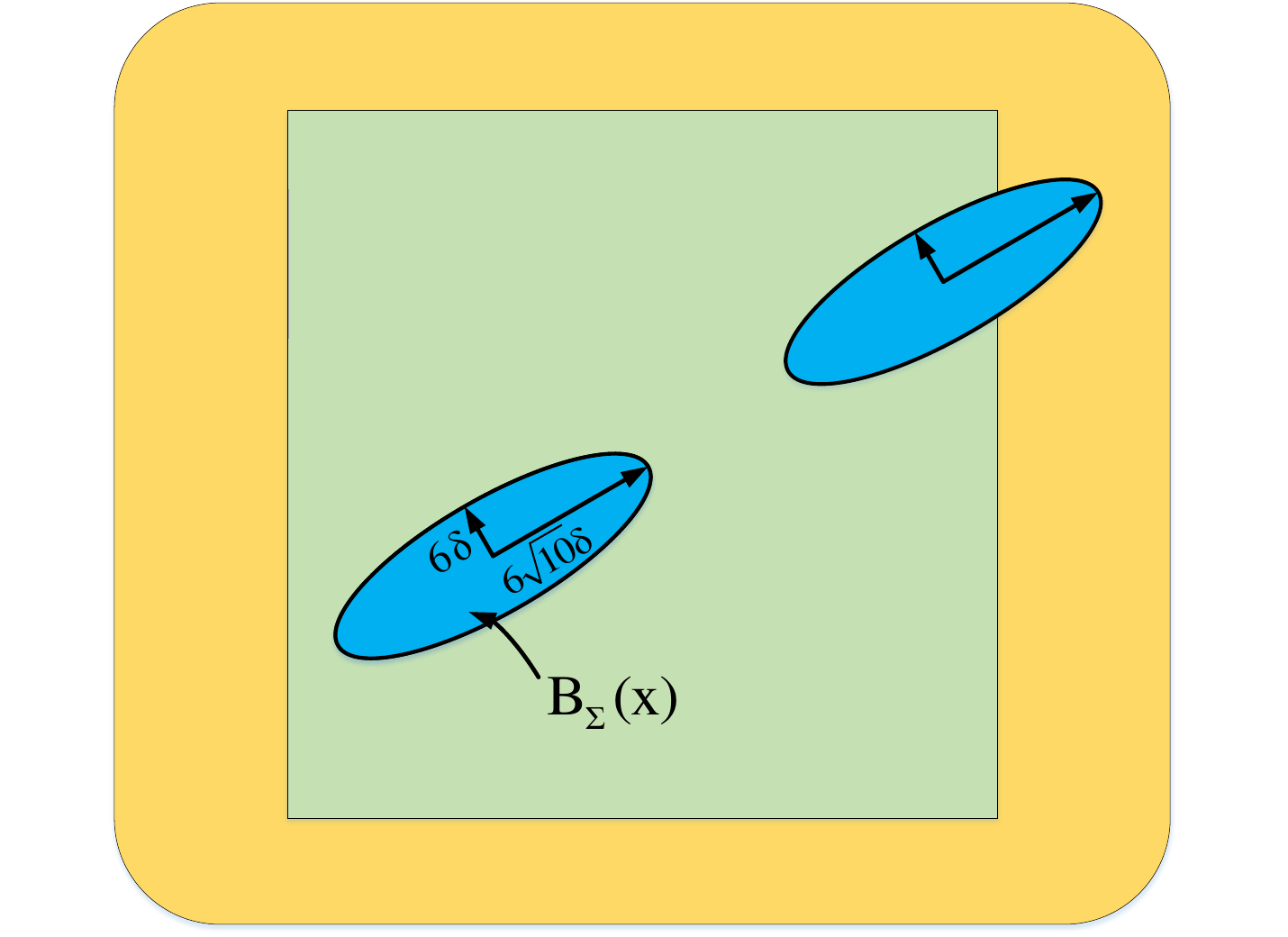}\includegraphics[scale=0.1]{./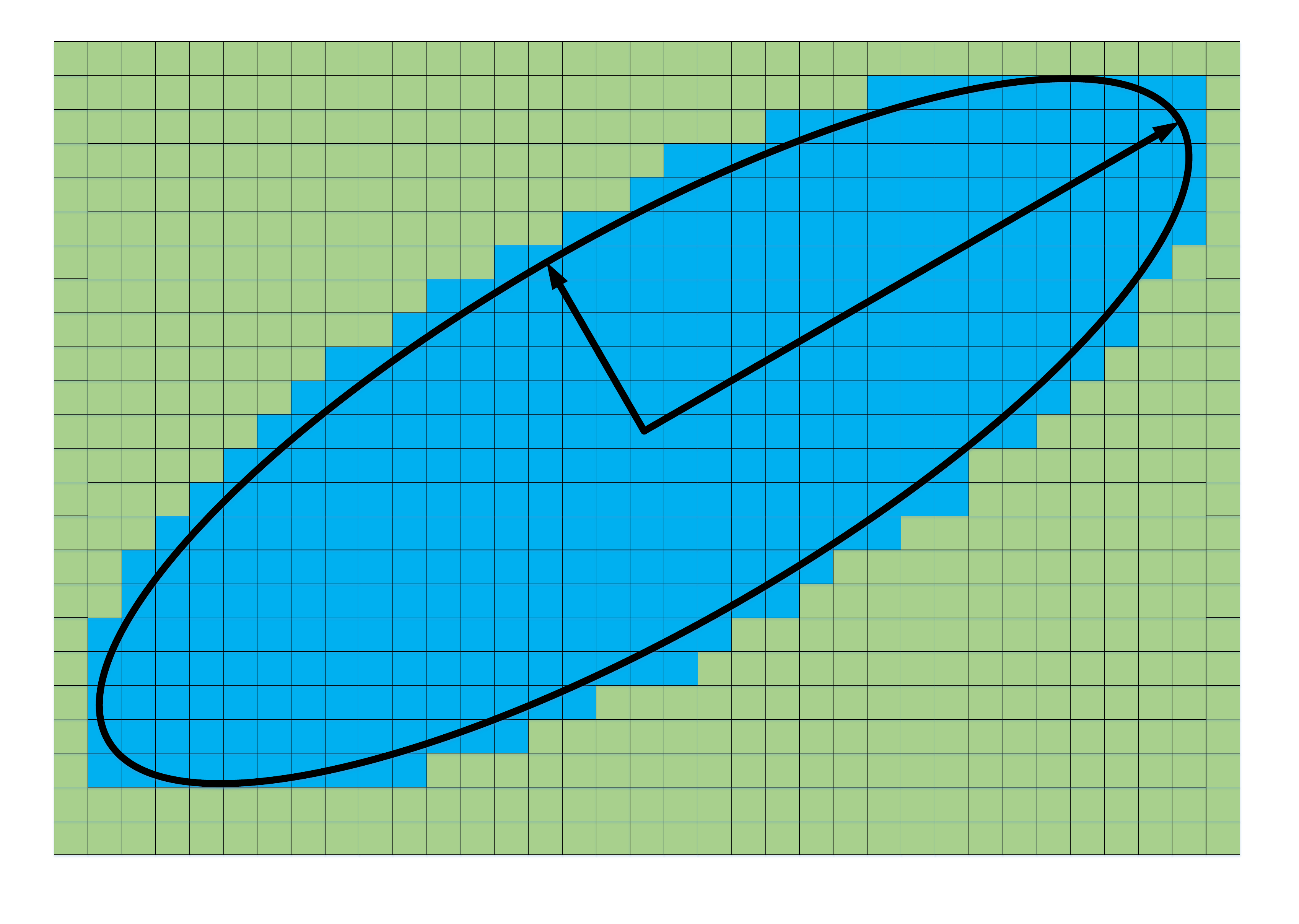}}
\caption{Illustration of the computational domain $\Omega=\Omega_s\cup\Omega_c$ and the influence regions with grids in the background.}
\label{meat}
\end{figure}

\begin{table}[!ht]\small
\center
\renewcommand{\arraystretch}{1.2}
\begin{tabular}{|c||c|c|c|c|c|c|}
\hline $\mathrm{N}$ & $\mathbf{A}_1$ &CR& $\mathbf{A}_2$ &CR& $\mathbf{A}_3$&CR \\\hline 
\hline 40 & $4.8765\times10^{-4}$ &-& $3.5896\times10^{-3}$&- & $2.8574\times10^{-3}$&- \\
\hline 80 & $1.1504\times10^{-4}$ & {2.08}&$8.4238\times10^{-4}$ &{2.09}& $6.7127\times10^{-4}$ &{2.09}\\
\hline 160 & $2.8346\times10^{-5}$& {2.02} & $2.0492\times10^{-4}$ &{2.04}& $1.5967\times10^{-4}$ & {2.07}\\
\hline 320 & $6.6997\times10^{-6}$ &{2.08}& $4.5901\times10^{-5}$ & {2.16}& $3.9727\times10^{-5}$ & {2.01}\\
\hline
\end{tabular}
\caption{Numerical results on the  discrete  {$L^{\infty}$}   solution errors and corresponding convergence rates produced by the linear collocation scheme \eqref{detailed_Interpolate_linear} for the nonlocal diffusion model \eqref{dbc} with $\delta=h$  in Example \ref{exp4}.}
\label{monday}
\end{table}

{For comparison purpose, the numerical results obtained using the quadrature-based finite difference discretization \eqref{acold} are also presented in Table \ref{zhouwu}. It is evident from the results that the scheme  \eqref{acold} produces second-order convergence for the isotropic diffusion case (${\mathbf A}_1$), but  fails to converge for the anisotropic diffusion cases (${\mathbf A}_2$ and ${\mathbf A}_3$)} as discussed in Remark \ref{quconv}.
\begin{table}[h]\small
	\center

	\renewcommand{\arraystretch}{1.2}	
		\begin{tabular}{|c||c|c|c|c|c|c|}
		\hline $\mathrm{N}$ & $\mathbf{A}_1$ &CR& $\mathbf{A}_2$ &CR& $\mathbf{A}_3$&CR \\\hline 
		\hline 40 & $5.7139\times10^{-4}$ &-& $4.1611\times10^{-3}$&- & $4.4236\times10^{-3}$&- \\
		\hline 80 & $1.3425\times10^{-4}$ & 2.09&$1.8107\times10^{-3}$ &1.20& $2.3123\times10^{-3}$ &0.94\\
		\hline 160 & $3.3151\times10^{-5}$& 2.02 & $1.3431\times10^{-3}$ & 0.43& $1.8423\times10^{-3}$ &0.33\\
		\hline 320 & $7.8984\times10^{-6}$ & 2.07& $1.2238\times10^{-3}$ &0.13 & $1.7201\times10^{-3}$ &0.10\\
		\hline
	\end{tabular}
		\caption{Numerical results on the discrete  $L^{\infty}$  solution errors and corresponding convergence rates  produced by the quadrature-based finite difference discretization scheme (\ref{acold}) for the nonlocal diffusion model \eqref{dbc} with  $\delta=h$ in Example \ref{exp4}.}
		\label{zhouwu}
	
\end{table}

\begin{exmp}\label{exp3D}
We consider the 3D domain $\Omega_s = [0, 1] \times [0, 1]\times [0, 1]$ and take $u(x_1,x_2,x_3)=\sin(x_1^2+x_2^2+x_3^2)$ as the exact solution of the classic diffusion (PDE) problem \eqref{dbc-pde}.
 The following three different $3\times 3$ constant coefficient matrices are tested:
 \begin{equation*}
\mathbf{A}_1=	
\left(\begin{array}{ccc}
1& 0&0\\[3pt]
0 & 1&0\\[3pt]
0&0&1
\end{array}\right),\qquad\mathbf{A}_2=     \left(\begin{array}{ccc}
4& 0&0\\[3pt]
0 & 1&0\\[3pt]
0&0&1\end{array}\right),\quad 
\end{equation*}
and
 \begin{equation*}
{\mathbf{A}_3=\left(\begin{array}{ccc}
\cos\frac{\pi}{4} &-\sin\frac{\pi}{4}& 0\\[3pt]
\sin\frac{\pi}{4}&\cos\frac{\pi}{4} &0\\[3pt]
0&0&1
\end{array}\right).
\left(\begin{array}{ccc}
	4 &0& 0\\[3pt]
	0&1&0\\[3pt]
	0&0&1
\end{array}\right)
\left(\begin{array}{ccc}
\cos\frac{\pi}{4} &\sin\frac{\pi}{4}& 0\\[3pt]
-\sin\frac{\pi}{4}&\cos\frac{\pi}{4} &0\\[3pt]
0&0&1
\end{array}\right).}
\end{equation*}

 For the nonlocal diffusion model \eqref{dbc},  the boundary value $g(x_1, x_2,x_3)$ are directly obtained from the exact solution $u(x_1,x_2,x_3)$ and the source term  $f(x_1,x_2,x_3)$ are determined accordingly based on the classic diffusion problem \eqref{dbc-pde}.
\end{exmp}

In this example,  the influence region for $\mathbf{A}_1$ associated with a point $\x\in\Omega_s$ is given by 
 $$B_{\delta,{\bf A}_1,\alpha}(\x)=\left\{(y_1,y_2,y_3)\,\Big|\,\frac{(y_{1}-x_{1})^2}{36}+ \frac{(y_{2}-x_{2})^2}{36}+\frac{(y_{3}-x_{3})^2}{36}\le \delta^2\right\},$$ which corresponds to the ball of radius $6\delta$ centered at $\x$. The influence region for $\mathbf{A}_2$ is given by $$B_{\delta,{\bf A}_2,\alpha}(\x)=\left\{(y_1,y_2,y_3)\,\Big|\,\frac{(y_{1}-x_{1})^2}{144}+ \frac{(y_{2}-x_{2})^2}{36}+\frac{(y_{3}-x_{3})^2}{36}\le \delta^2\right\},$$ which is the ellipsoid region centered at $\x$ with the three axises of length $24\delta$, $12\delta$, and $12\delta$, respectively. The influence area for $\mathbf{A}_3$ is given by 
\begin{equation}
B_{\delta,{\bf A}_3,\alpha}(\x)=\left\{(y_1,y_2,y_3)\,\Big|\,\frac{5(y_{1}-x_{1})^2}{288\delta^2}-\frac{(y_{1}-x_{1})(y_{2}-x_{2})}{48}+\frac{5(y_{2}-x_{2})^2}{288}+\frac{(y_{3}-x_{3})^2}{36}\le \delta^2\right\}.
\end{equation}
The diagonalization of $\mathbf{A}_3$ shows that $B_{\delta,{\bf A}_3,\alpha}(\x)$ can be obtained by rotating the ellipsoid region corresponding to  $\mathbf{A}_2$ clockwisely with an angle of $45^\circ$ along the $z$-axis. 
 We divide the domain $\Omega_s$ into a uniform grid of $N\times N \times N$ cells with the grid size $h=1/N$, where $N=20,30,40,50$,  respectively. We still set $\delta=h$ on all level of grids. Table \ref{today} reports the  discrete {$L^{\infty}$}  solution errors and corresponding convergence rates produced by the  linear collocation scheme \eqref{linearsys}. We again observe the  second-order convergence along the refinement for these $\delta$-convergence tests.

\begin{table}[!ht]\small
\center
\renewcommand{\arraystretch}{1.2}
\begin{tabular}{|c||c|c|c|c|c|c|}
\hline $\mathrm{N}$ & $\mathbf{A}_1$ &CR& $\mathbf{A}_2$ &CR& $\mathbf{A}_3$&CR \\ \hline 
\hline 20 & $2.4291\times10^{-3}$&-& $6.1233\times10^{-3}$ &-& $6.9361\times10^{-3}$&- \\
\hline 30 & $1.0677\times10^{-3}$ &2.03& $2.7587\times10^{-3}$ &1.97& $3.041\times10^{-3}$&2.03 \\
\hline 40 & $6.1470\times10^{-4}$ &1.92& $1.4588\times10^{-3}$ &2.21& $1.6902\times10^{-3}$ &2.04\\
\hline 50 & $3.8099\times10^{-4}$ &2.14& $9.2431\times10^{-4}$ &2.05& $1.0767\times10^{-3}$ &2.02\\
\hline
\end{tabular}
\caption{Numerical results on the discrete  {$L^{\infty}$}   solution errors and corresponding convergence rates  produced by the linear collocation scheme \eqref{detailed_Interpolate_linear} for the nonlocal diffusion model \eqref{dbc} with  $\delta=h$ in Example \ref{exp3D}.}
\label{today}
\end{table}

\begin{exmp}\label{expvar}
In this example we consider the case of variable diffusion coefficient matrix.
We consider the 2D domain $\Omega_s = [0, 1] \times [0, 1]$ and take $u(x_1,x_2)=\sin(x_1^2+x_2^2)$ as the exact solution of the classic diffusion (PDE) problem \eqref{dbc-pde}.
The following two different  variable $2\times 2$  coefficient matrices are considered:
 \begin{equation*}
\mathbf{A}_1(x_1,x_2)=\left(\begin{array}{cc}
k_1(x_1,x_2) & 0 \\[3pt]
0 & k_2(x_1,x_2)
\end{array}\right)
\end{equation*}
and
\begin{equation*}
\mathbf{A}_2(x_1,x_2)=\left(\begin{array}{cc}
\cos \frac{5\pi}{12} & \sin \frac{5\pi}{12} \\[3pt]
-\sin \frac{5\pi}{12} & \cos \frac{5\pi}{12}
\end{array}\right)\left(\begin{array}{cc}
k_1(x_1,x_2) & 0 \\[3pt]
0 & k_2(x_1,x_2)
\end{array}\right)\left(\begin{array}{cc}
\cos \frac{5\pi}{12} & -\sin \frac{5\pi}{12} \\[3pt]
\sin \frac{5\pi}{12} & \cos \frac{5\pi}{12}
\end{array}\right),
\end{equation*}
where $k_1(x_1,x_2)=4-2 x_1^2-x_2^2$ and  $k_2(x,x_2)=4-x_1^2-2 x_2^2$. 
For the nonlocal diffusion model \eqref{dbc}, the boundary value $g(x_1, x_2)$ are directly obtained from the exact solution $u(x_1,x_2)$ and the source term  $f(x_1,x_2)$ are determined accordingly based on the classic diffusion problem \eqref{dbc-pde}.
\end{exmp}

The shape of the influence region associated with a point in $\Omega_s$ varies from place to place in this example. For example, for $\mathbf{A}_1(x_1,x_2)$, the  influence regions at the point $(1,1)$ and $(0,0)$ are a circle, but the  influence  at the point $(0,1)$ is an elliptic region. The influence region of $\mathbf{A}_2(x_1,x_2)$ can be obtained by counterclockwisely rotating the influence region corresponding to $\mathbf{A}_1(x,y)$ with an angle of  $75^\circ$. We use
a uniform grid of $N\times N$ cells with  the grid size $h=1/N$, where $N=40, 80, 160, 320$, respectively. We again set $\delta=h$ on all level of grids.
Table \ref{tusu} reports the  discrete {$L^{\infty}$}  solution errors and corresponding convergence rates produced by the   linear collocation scheme \eqref{detailed_Interpolate_linear}. We again observe the  second-order convergence along the refinement for these $\delta$-convergence test cases although the diffusion coefficient matrices are varying now.

\begin{table}[!ht]\small
\center

\renewcommand{\arraystretch}{1.2}
\begin{tabular}{|c||c|c|c|c|}
\hline $\mathrm{N}$ & $\mathbf{A}_1(x_1,x_2)$ &CR& $\mathbf{A}_2(x_1,x_2)$ &CR \\\hline 
\hline 40 & $1.4229\times10^{-3}$ &-& $1.4595\times10^{-3}$&-  \\
\hline 80& $3.5517\times10^{-4}$&2.00 & $3.6506\times10^{-4}$  &2.00\\
\hline 160 & $8.5297\times10^{-5}$ &2.06& $8.9876\times10^{-5}$& 2.02\\
\hline 320 & $2.0096\times10^{-5}$ &2.09& $2.5342\times10^{-5}$&1.83\\
\hline
\end{tabular}
\caption{Numerical results on the  discrete {$L^{\infty}$}   solution errors and corresponding convergence rates  produced by the linear  collocation scheme \eqref{detailed_Interpolate_linear} for the nonlocal diffusion model \eqref{dbc} with $\delta=h$ in Example \ref{expvar}. Note 
that the two diffusion coefficient matrices are varying.}
\label{tusu}

\end{table}

\subsection{Test of discrete maximum principle}

In this subsection we test the discrete maximum principle of the linear collocation scheme \eqref{linearsys}. 
\begin{exmp}\label{expDis}
We take the 2D domain $\Omega_s = [0, 1] \times [0, 1]$ and $\delta =1/40$ for the nonlocal diffusion problem \eqref{dbc}. Four different $2\times 2$ coefficient matrices (two constant and two variable matrices) are considered:
$$
\begin{aligned}
& \mathbf{A}_1=\left(\begin{array}{cc}
10 & 0 \\[3pt]
0 & 1
\end{array}\right),\\[3pt]
& \mathbf{A}_2=\left(\begin{array}{cc}
\cos \frac{\pi}{6} & \sin \frac{\pi}{6} \\[3pt]
-\sin \frac{\pi}{6} & \cos \frac{\pi}{6}
\end{array}\right)\left(\begin{array}{cc}
10 & 0 \\[3pt]
0 & 1
\end{array}\right)\left(\begin{array}{cc}
\cos \frac{\pi}{6} & -\sin \frac{\pi}{6} \\[4pt]
\sin \frac{\pi}{6} & \cos \frac{\pi}{6}
\end{array}\right),\\[3pt]
& \mathbf{A}_3(x_1,x_2)=\left(\begin{array}{cc}
k_1(x_1,x_2)& 0 \\[3pt]
0 & k_2(x_1,x_2)
\end{array}\right),\\[3pt]
& \mathbf{A}_4(x_1,x_2)=\left(\begin{array}{cc}
\cos \frac{5\pi}{12} & \sin \frac{5\pi}{12} \\[3pt]
-\sin \frac{5\pi}{12} & \cos \frac{5\pi}{12}
\end{array}\right)\left(\begin{array}{cc}
k_1(x_1,x_2) & 0 \\
0 & k_2(x_1,x_2)
\end{array}\right)\left(\begin{array}{cc}
\cos \frac{5\pi}{12} & -\sin \frac{5\pi}{12} \\[3pt]
\sin \frac{5\pi}{12} & \cos \frac{5\pi}{12}
\end{array}\right),
&
\end{aligned}
$$
where $k_1(x_1,x_2)=4-2 x_1^2-x_2^2$ and  $k_2(x_1,x_2)=4-x_1^2-2 x_2^2$.
In this example, we impose a Dirichlet boundary condition on $\Omega_c$ as follows:  $u(x_1, x_2) = 1$ if $x_1 \leq 0$ or $x_2 \leq 0$ and $u(x_1, x_2) = 0$ otherwise.  The source term  is chosen as $f(x_1,x_2)=0$. Although the exact solution of of the nonlocal diffusion problem \eqref{dbc} is unknown, we know its value must fall between 0 and 1. 
\end{exmp}

We take a  uniform grid of $80\times80$ and solve the nonlocal model using the linear collocation scheme \eqref{linearsys}. 
Figure \ref{talk} plots the numerical solutions produced with these four different coefficient matrices. We can see that  the discrete
 maximum principle is well preserved in all cases.

\begin{figure}[!ht] 
\centering{
\subfigure[\label{fig:aa1} The coefficient matrix ${\bf A}_1$]{
	\begin{minipage}[t]{.46\linewidth}
		\centering
		\includegraphics[width=.95\linewidth]{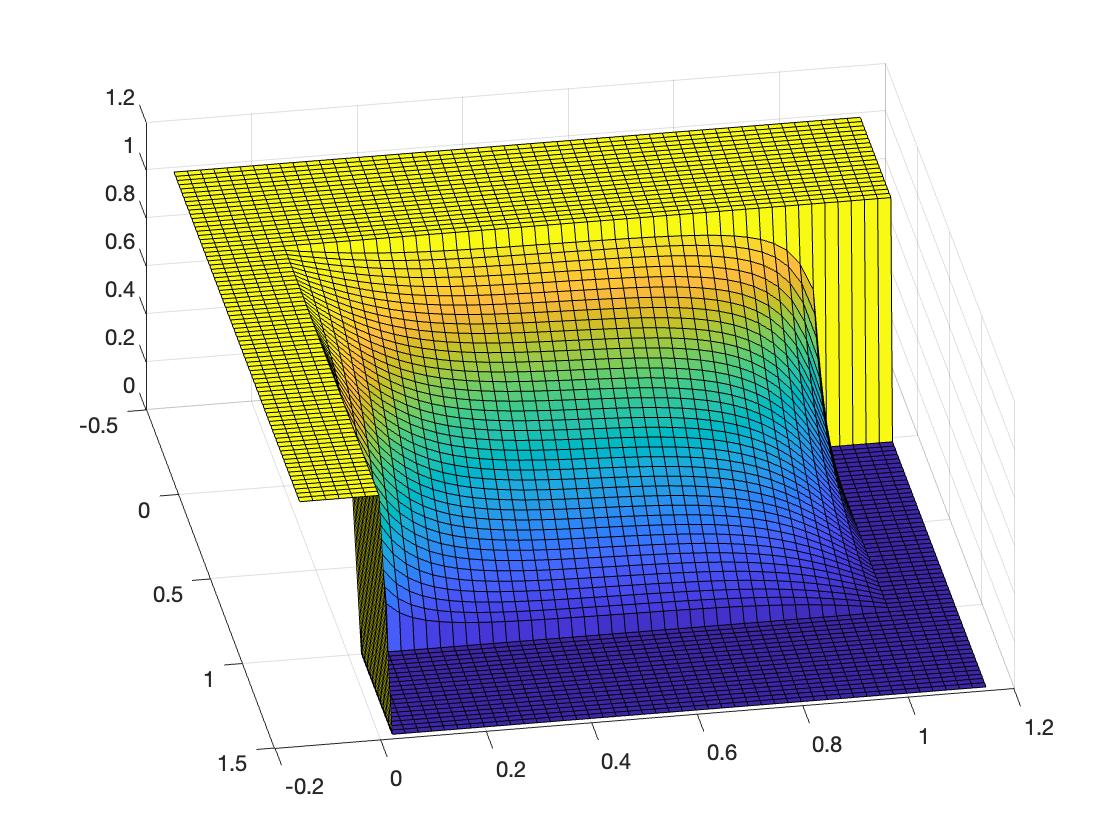}
	\end{minipage}
}
\hspace{-0mm}
\subfigure[\label{fig:bb1} The coefficient matrix ${\bf A}_2$]{
	\begin{minipage}[t]{.46\linewidth}
		\centering
		\includegraphics[width=.95\linewidth]{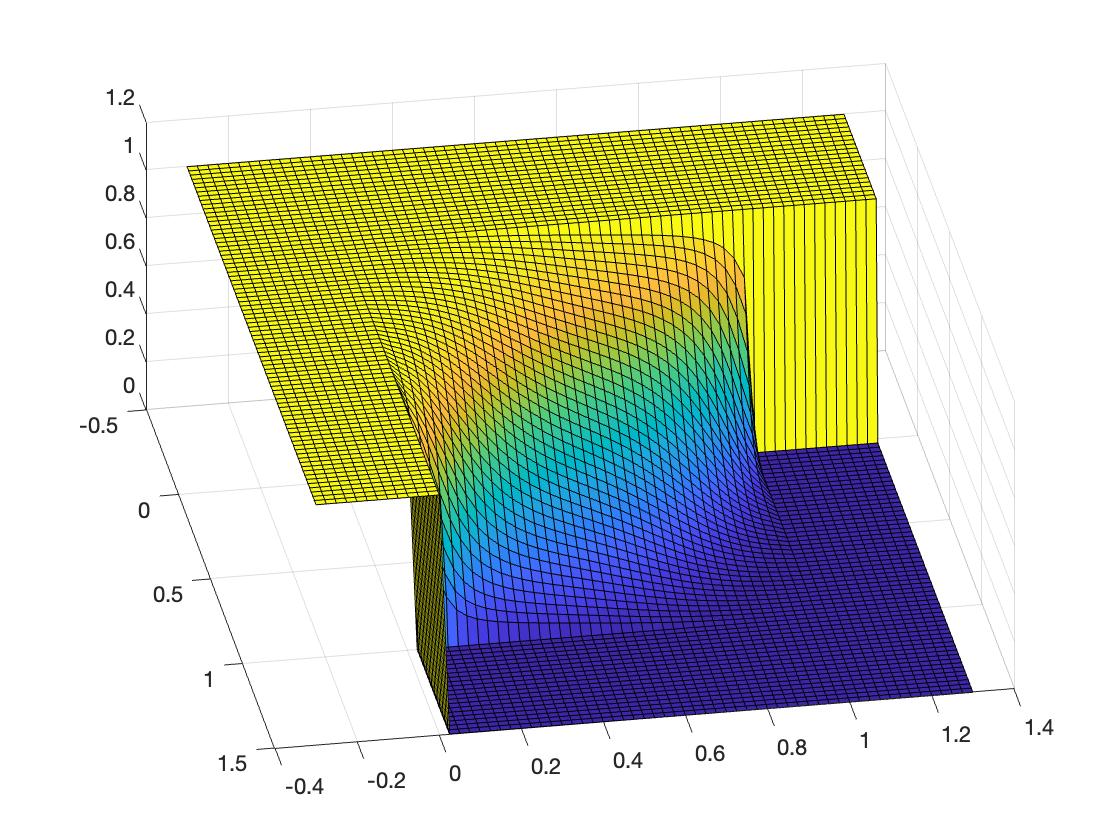}
	\end{minipage}
}
\hspace{.02mm}
\qquad
\subfigure[\label{fig:cc1} The coefficient matrix${\bf A}_3(x_1,x_2)$]{
	\begin{minipage}[t]{.46\linewidth}
		\centering
		\includegraphics[width=.95\linewidth]{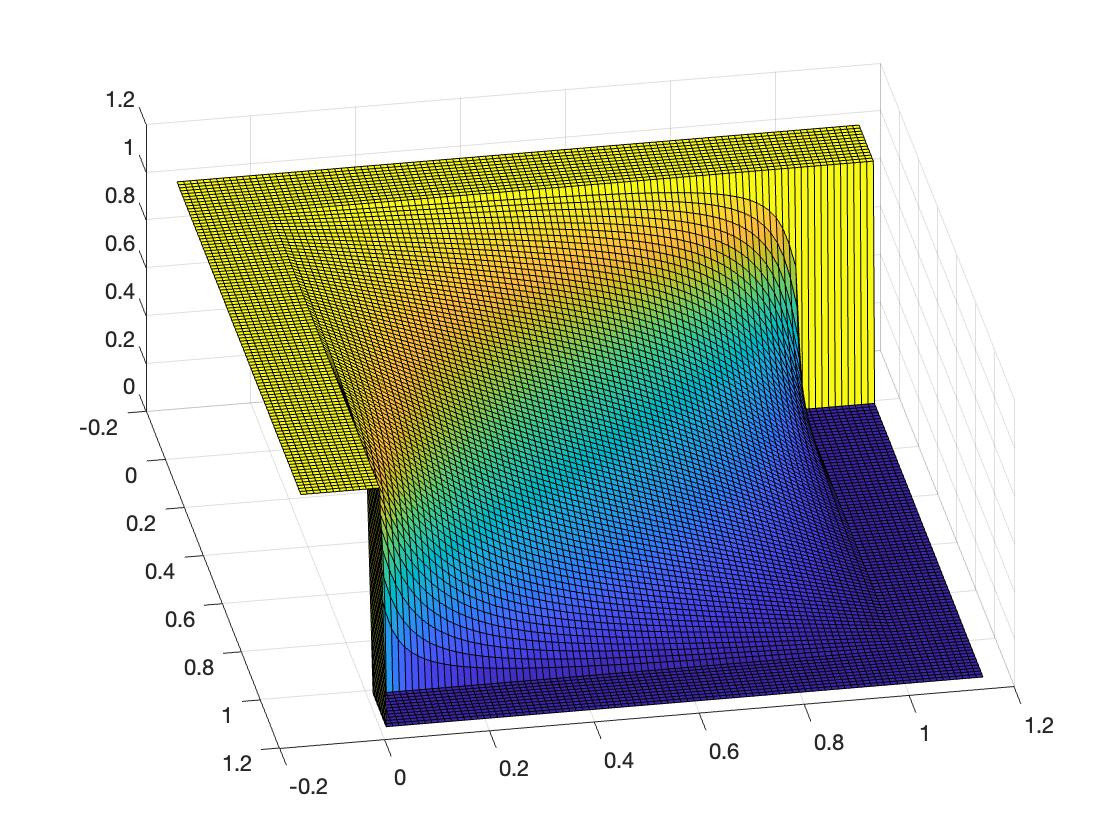}
	\end{minipage}
}
\hspace{.02mm}
\subfigure[\label{fig:dd1} The coefficient matrix ${\bf A}_4(x_1,x_2)$]{
	\begin{minipage}[t]{.46\linewidth}
		\centering
		\includegraphics[width=.95\linewidth]{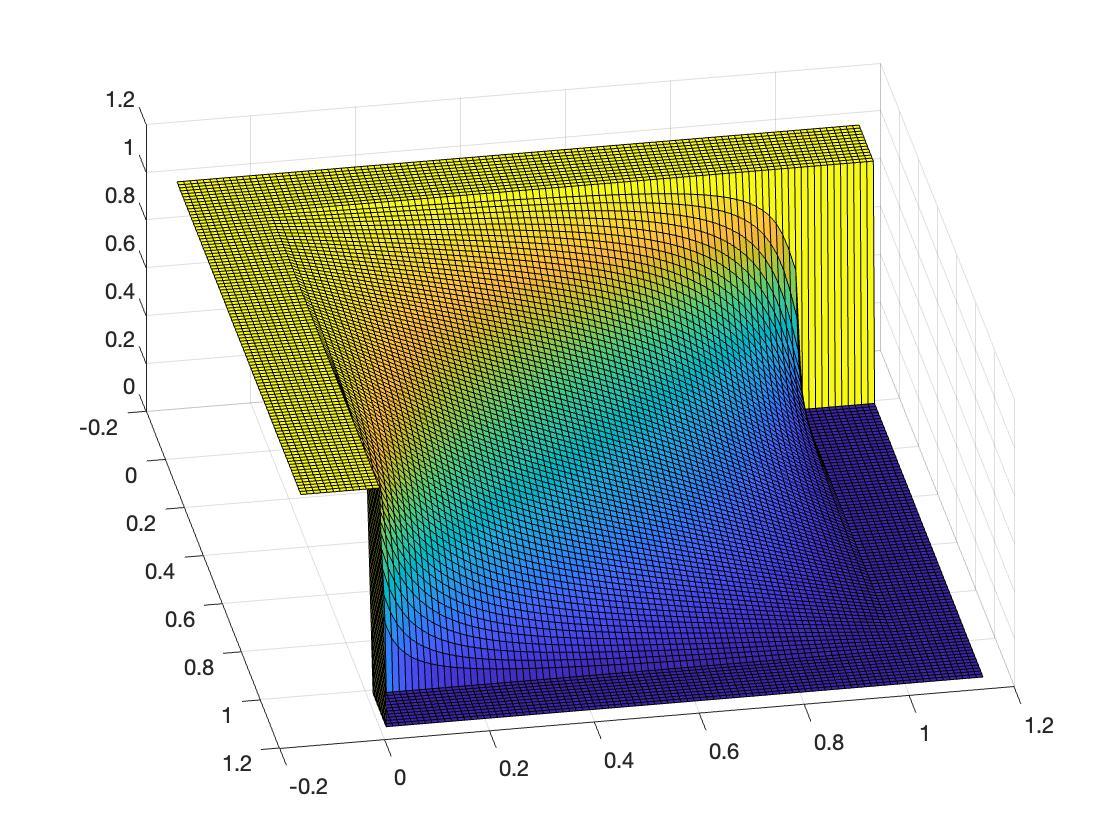}
	\end{minipage}
}}
\caption{Numerical solutions produced by the linear collocation scheme \eqref{detailed_Interpolate_linear} for the nonlocal diffusion model \eqref{dbc} with the four different diffusion coefficient matrices in Example \ref{expDis}.}
\label{talk}
\end{figure}

\subsection{Effect of $\chi_\alpha^2(d)$ on the model approximation of $\mathcal{L}_{\delta,\alpha}$ to $\mathcal{L}_{\delta}$}\label{chicomp}

In this subsection, we test the effect of the choice of $\chi_\alpha^2(d)$ on the accuracy of  the truncated nonlocal diffusion operator $\mathcal{L}_{\delta,\alpha} $ defined in  \eqref{expnon-mod} as an approximation of the original nonlocal diffusion operator $\mathcal{L}_{\delta} $ defined in \eqref{expnon}. 
\begin{exmp} \label{exmp1}
We take exactly the same experimental 
settings as those of  Example \ref{exp4}, except that we now test different values of   $\chi_\alpha^2(2)$. Specifically, we choose  $\chi_\alpha^2(d) = 9, 16, 25, 26, 49$, respectively.
\end{exmp}

We take a uniform grid of $N\times N$ cells with  the grid size $h=1/N$, where $N=40, 80, 160$ respectively. 
Table \ref{sunday} reports the numerical results on the  discrete {$L^{\infty}$}   solution errors  produced by the linear  collocation scheme \eqref{detailed_Interpolate_linear} for the nonlocal diffusion model \eqref{dbc} with  $\delta=h$ under different choices of $\chi^2_\alpha(2)$.  We can see from Table \ref{sunday} that the solution errors decrease and converge rapidly along with the increasing of $\chi^2_\alpha(2)$ from 9 to 49, and the differences of solution errors between $\chi^2_\alpha(2)=36$ and $\chi^2_\alpha(2)=49$ are almost negligible.
Therefore, we suggest the selection of {$\chi^2_\alpha(2)=36$} in practice, in order to ensure the  model accuracy of the truncated nonlocal diffusion model \eqref{dbc} while still maintaining the efficiency of numerical simulations.

\begin{table}[!htbp] \small
	\center
	\renewcommand{\arraystretch}{1.2}
	\begin{tabular}{|c||c|c|c|c|c|}
		\hline & \multicolumn{5}{|c|}{${\mathbf{A}_1}=[1,0,0;1]$} \\\hline 
		\hline  $\mathrm{N}$  &$\chi_\alpha^2(2)=9$ & $\chi_\alpha^2(2)=16$ & $\chi_\alpha^2(2)=25$ & $\chi_\alpha^2(2)=36$ & $\chi_\alpha^2(2)=49$ \\
		\hline 40 & $6.3785\times10^{-2}$ & $9.7359\times 10^{-4} $ & $4.7136\times 10^{-4}$ & $4.8765\times 10^{-4}$ & $4.6316\times10^{-4}$ \\
		\hline 80 & $1.0611\times10^{-2}$ &$6.8071\times 10^{-4}$ & $1.2696\times 10^{-4}$ & $1.1503\times 10^{-4}$ & $1.1788\times10^{-4}$ \\
		\hline 160 & $1.0517\times10^{-2}$ & $8.9142\times 10^{-4}$ & $4.2278\times 10^{-5}$ & $2.8346\times 10^{-5}$ & $2.8360 \times 10^{-5}$ \\
		\hline & \multicolumn{5}{|c|}{$\mathbf{A}_2=[10,0,0;1]$} \\\hline 
		\hline  $\mathrm{N}$  &$\chi_\alpha^2(2)=9$ & $\chi_\alpha^2(2)=16$ & $\chi_\alpha^2(2)=25$ & $\chi_\alpha^2(2)=36$ & $\chi_\alpha^2(2)=49$ \\
	\hline 40 & $1.7101\times 10^{-2}$ & $4.0173\times 10^{-3} $ & $3.7986\times 10^{-3}$ & $3.5896\times 10^{-3}$ & $3.5861\times 10^{-3}$  \\
	\hline 80 & $1.3309\times 10^{-2}$ & $1.3378\times 10^{-3}$ & $8.4862\times 10^{-4}$ & $8.4238\times 10^{-4}$ & $8.4376\times10^{-4}$ \\
	\hline 160 & $1.2548\times10^{-2}$ & $7.5558\times 10^{-4}$ & $2.0819\times 10^{-5}$ & $2.0492\times 10^{-5}$ & $2.0315 \times 10^{-5}$ \\
		\hline & \multicolumn{5}{|c|}{ $\mathbf{A}_3=[31/4,-9\sqrt{3}/4;-9\sqrt{3}/4,13/4]$} \\\hline 
		\hline  $\mathrm{N}$  &$\chi_\alpha^2(2)=9$ & $\chi_\alpha^2(2)=16$ & $\chi_\alpha^2(2)=25$ & $\chi_\alpha^2(2)=36$ & $\chi_\alpha^2(2)=49$ \\
		\hline 40 & $1.6155\times 10^{-2}$ & $3.3413\times 10^{-3} $ & $2.8710\times 10^{-3}$ & $2.8574\times 10^{-3}$ & $2.8573\times 10^{-2}$  \\
	\hline 80 & $1.3834\times 10^{-2}$ & $1.1772\times 10^{-3}$ & $6.8493\times 10^{-4}$ & $6.7127\times 10^{-4}$ & $6.7122\times10^{-4}$ \\
	\hline 160 & $1.3154\times10^{-2}$ & $6.8957\times 10^{-4}$ & $1.7647\times 10^{-4}$ & $1.6276\times 10^{-4}$ & $1.6276 \times 10^{-4}$ \\
		\hline
	\end{tabular}
	\caption{Numerical results on the  discrete  {$L^{\infty}$}   solution errors with fixed $\delta=h$ produced by the linear  collocation scheme \eqref{detailed_Interpolate_linear} for the nonlocal diffusion model \eqref{dbc} under different choices of $\chi^2_\alpha(d)$  in Example \ref{exmp1}.}
	\label{sunday}
\end{table}
\section{Conclusions}

This paper presents a novel bond-based nonlocal diffusion model with matrix-valued coefficients in non-divergence form. Our approach involves integrating the coefficient matrix into the covariance matrix and employing the multivariate Gaussian function with truncation as the kernel function to accurately encapsulate  the diffusion process. 
Substantiating the robustness of our model, we establish its well-posedness along with elucidating certain inherent properties.  To numerically solve the model, we also design an efficient linear collocation discretization scheme. The paper includes extensive experiments conducted in two and three dimensions to showcase the versatility of our model in addressing various isotropic and anisotropic diffusion problems. Furthermore, these experiments  numerically demonstrate high-order accuracy and effective asymptotic compatibility of our collocation scheme
for solving the proposed nonlocal diffusion model. On the other hand,  it still remains an open question to rigorously prove the related results on convergence and asymptotic compatibility of the proposed collocation scheme in general dimensions. In addition,  it is  highly expected to further develop bond-based nonlocal diffusion models in conservative form.
\label{}

\small

\end{document}